\documentclass[11pt]{article}
\usepackage[T1]{fontenc}
\usepackage{lmodern}
\usepackage[margin=1in]{geometry}
\usepackage{amsmath,amssymb,amsthm,mathtools}
\usepackage{enumitem,microtype,needspace,etoolbox}
\usepackage[colorlinks=true,linkcolor=blue!50!black,citecolor=blue!50!black,urlcolor=blue!50!black]{hyperref}
\usepackage{xcolor}
\hypersetup{pdftitle={On the square of an isosimple module},pdfsubject={On the square of an isosimple module},pdfkeywords={isosimple module, isoartinian module, quasi-projective module, endomorphism ring}}
\usepackage{fancyhdr}
\setlist[enumerate]{itemsep=2pt,topsep=4pt}
\newtheorem{theorem}{Theorem}[section]
\newtheorem{proposition}[theorem]{Proposition}
\newtheorem{lemma}[theorem]{Lemma}
\newtheorem{corollary}[theorem]{Corollary}
\theoremstyle{definition}

\newtheorem{example}[theorem]{Example}
\newtheorem{question}[theorem]{Question}
\theoremstyle{remark}
\newtheorem{remark}[theorem]{Remark}
\newcommand{\End}{\operatorname{End}}
\newcommand{\Hom}{\operatorname{Hom}}
\newcommand{\Ext}{\operatorname{Ext}}
\newcommand{\ann}{\operatorname{ann}}
\newcommand{\Tr}{\operatorname{Tr}}
\newcommand{\udim}{\operatorname{u.dim}}

\newcommand{\ess}{\leq_{\mathrm{ess}}}
\newcommand{\op}{\mathrm{op}}
\newcommand{\gl}{\operatorname{gl.dim}}
\newcommand{\Jac}{\operatorname{Jac}}
\title{On the square of an isosimple module}
\author{Ran Yan \thanks{School of Mathematics and Statistics, Tianshui Normal University, Tianshui 741001, China} \and Xiaolei Zhang  \thanks{Corresponding author} \thanks{School of Mathematics and Statistics, Tianshui Normal University, Tianshui 741001, China}}
\date{}
\BeforeBeginEnvironment{theorem}{\Needspace{9\baselineskip}}
\BeforeBeginEnvironment{proposition}{\Needspace{7\baselineskip}}
\BeforeBeginEnvironment{lemma}{\Needspace{7\baselineskip}}
\BeforeBeginEnvironment{corollary}{\Needspace{6\baselineskip}}
\begin{document}
	\maketitle
	\vspace{-1.5em}
	\begin{abstract}
		Let \(S\) be an isosimple module, that is, a nonzero module isomorphic to every nonzero submodule of itself. In this paper, we investigate  if the square \(S^2:=S\oplus S\) is isoartinian when \(S\) is an isosimple module. 
		If the base ring is commutative, then the answer is yes. However, for noncommutative rings this is no longer true  even when \(\End_R(S)\) is a discrete valuation ring.
		We also give some positive results coming from quasi-projectivity
		and Morita equivalence. Finally, we  consider when every
		finite powers of an isosimple module are isoartinian provided it holds for the square. Our related result  says that if \(S^2\)
		is isoartinian and \(E=\End_R(S)\) is either commutative Noetherian or right
		semihereditary, then every finite power of \(S\) is isoartinian.
	\end{abstract}
	\noindent\textbf{2020 Mathematics Subject Classification.}
	16D70, 16D40, 16P70.
	
	\noindent\textbf{Keywords.} Isosimple module; Isoartinian module;
	Quasi-projective module; Endomorphism ring; Reflexive module.
	
\section{Introduction and Preliminaries}
\subsection*{Introduction}

Recall that a nonzero module is called \emph{simple} if it has no nonzero proper
submodules. A module is called \emph{artinian} if every descending
chain of its submodules eventually becomes stationary. These are
basic notions in the structure theory of modules. Every simple
module is artinian, and every finite direct sum of simple modules
is artinian \cite{AF92,Lam99}. 
The notions of isosimplicity and isoartinianity arise by replacing
equality of submodules by isomorphism. A nonzero module \(S\) is
\emph{isosimple} if every nonzero submodule of \(S\) is isomorphic
to \(S\). A module \(M\) is \emph{isoartinian} if, for every
descending chain
\[
M_0\supseteq M_1\supseteq M_2\supseteq\cdots
\]
of submodules of \(M\), there is an integer \(d\geq 0\) such that
\(M_i\cong M_d\) for all \(i\geq d\). Thus the submodules need not
eventually be equal; only their isomorphism types must stabilize.
Every simple module is isosimple, every artinian module is
isoartinian, and every isosimple module is isoartinian. 

Facchini and Nazemian studied chain conditions up to isomorphism
in \cite{FN16,FN17} and developed further aspects of the theory
in \cite{FN19}. Their work connects these conditions with
generalizations of classical module structure, including the
isoradical and modules generated by isosimple modules.
Isosimple modules also appear under the name \emph{virtually
	simple modules} in \cite{BDV18a,BDV18b}. A module is called
\emph{virtually semisimple} if each of its submodules is
isomorphic to a direct summand. These related notions lead to
versions of familiar decomposition questions in which
isomorphism replaces equality. However, their direct-sum
properties do not follow formally from those of simple or
semisimple modules.

The fact that finite direct sums of simple modules are artinian
suggests asking whether direct sums of isosimple modules satisfy
the corresponding chain condition up to isomorphism. Facchini
and Nazemian formulated this question in
\cite[Open Problems~(2)]{FN19}:

\begin{quote}
	Is a direct sum of isosimple modules an isoartinian module?
\end{quote}

In this paper, we settle the question negatively. We give a
commutative example in which a direct sum of two nonisomorphic
isosimple modules is not isoartinian. We then prove that, over
a noncommutative ring, failure can occur even for two copies of
the same module. Thus the more restrictive question
\begin{equation}\label{eq:mainquestion}
	S\text{ isosimple}
	\quad\Longrightarrow\quad
	S^2:=S\oplus S\text{ isoartinian}?
\end{equation}
also has a negative answer in general. Our counterexample has
a faithful isosimple module over a complete local prime ring
that is Noetherian on both sides. Moreover, the endomorphism
ring of the module is a commutative discrete valuation ring.

The contents of the paper are as follows.   
Section~2 classifies isosimple modules over commutative rings:
they are precisely the modules \(R/\mathfrak p\) for which
\(R/\mathfrak p\) is a principal ideal domain. Consequently,
all their finite powers are isoartinian.   Section~3 gives a counterexample of (\ref{eq:mainquestion})  even when \(\End_R(S)\) is a discrete valuation ring. Section~4 proves that quasi-projective isosimple modules
have isoartinian finite powers. Morita equivalence preserves
the relevant properties and gives examples over matrix
rings over principal right ideal domains. We also show that
quasi-projectivity is not necessary.   
Finally, Section~5 considers whether
\begin{equation}\label{eq:powerquestion}
	S^2\text{ isoartinian}
	\quad\Longrightarrow\quad
	S^n\text{ isoartinian for every positive integer }n.
\end{equation}
The square hypothesis implies that \({}_ES\) is flat and
that every \(N\leq S^n\) has the form \(M\otimes_ES\), with
\(M\leq E^n\) finitely generated. We prove the implication
when \(E\) is commutative Noetherian or right semihereditary. 

All rings have identity, and all modules are unital. Unless otherwise stated,
modules are right modules. Let $R$ be a ring and  \(S\) be always an isosimple $R$-module. We write \(S^n\) for a finite direct sum, and 
\(E=\End_R(S)\).
Thus \(S\) is a left \(E\)-module, and \(\Hom_R(S,N)\) is a right \(E\)-module. By a principal right ideal domain we mean a domain in which every right ideal is principal.

	\subsection*{Preliminaries}
	
	We include the elementary properties on isosimple modules and isoartinian modules. Uniform
	dimension, essential submodules, and the Hopfian property are used in
	the usual sense \cite{AF92,Lam99}. The role of finite uniform dimension in
	quotient-ring theory goes back to Goldie \cite{Goldie60}.
	
	\begin{lemma}\label{lem:basic}
		An isosimple module \(S\) is cyclic, Noetherian and uniform. Every nonzero
		endomorphism of \(S\) is injective. Its endomorphism ring is a right Ore domain,
		and
		\[
		fE\subseteq gE\quad\Longleftrightarrow\quad fS\subseteq gS.
		\]
		The intersection of two nonzero principal right ideals of \(E\) is principal.
	\end{lemma}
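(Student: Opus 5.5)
The plan is to derive every claim from one observation: each nonzero submodule \(N\le S\) has the form \(N=fS\) for some injective \(f\in E\). Indeed, \(N\cong S\) by isosimplicity. Composing such an isomorphism \(S\to N\) with the inclusion \(N\hookrightarrow S\) gives an injective endomorphism \(f\) of \(S\) with image \(N\).

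The module-theoretic properties come first.

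\emph{Cyclic.} Since \(S\ne0\), pick \(0\ne x\in S\). Then \(xR\) is a nonzero submodule, so \(S\cong xR\) is cyclic.

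\emph{Noetherian.} Every nonzero submodule is isomorphic to \(S\), hence cyclic. So every submodule of \(S\) is finitely generated, which is the Noetherian condition.

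\emph{Injectivity of endomorphisms.} I expect this to be the main point. Let \(0\ne h\in E\), and suppose \(\ker h\ne 0\). Then \(S/\ker h\cong hS\), and \(hS\) is nonzero, so \(hS\cong S\). Also \(\ker h\cong S\), so \(\ker h=gS\) for some injective \(g\in E\). Now iterate: \(S/\ker h\cong S\) means \(S\) has a proper nonzero quotient isomorphic to itself. Pulling back kernels along this isomorphism gives a strictly ascending chain \(\ker h\subsetneq \ker h^2\subsetneq\cdots\). To see it is strict, use \(h(\ker h^{n+1})=\ker h^{n}\cap hS\) and the fact that \(\ker h\cap hS\ne0\). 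This last fact follows once uniformity is known, since \(\ker h\) and \(hS\) are both nonzero. The strict chain contradicts Noetherianity. Equivalently, Noetherian modules are Hopfian, and a surjection \(S\to S\) with nonzero kernel is impossible. So it is cleanest to prove uniformity first, and then invoke that Noetherian modules are Hopfian.

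\emph{Uniform.} Suppose \(A,B\le S\) are nonzero with \(A\cap B=0\). Then \(A\oplus B\le S\), so \(S\cong A\oplus B\cong S\oplus S\). Then \(S\oplus S\) embeds in \(S\), and iterating gives \(S^n\hookrightarrow S\) for all \(n\). This yields an infinite direct sum \(S^{(\mathbb N)}\) inside \(S\), which contradicts Noetherianity (a Noetherian module has finite uniform dimension). With uniformity in hand, the Hopfian argument above shows every nonzero \(h\in E\) is injective.

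The ring-theoretic claims follow next. If \(f,g\in E\) are nonzero, then \(fg\) is injective and hence nonzero, so \(E\) is a domain. For the equivalence, suppose first \(fE\subseteq gE\). Then \(f=gh\), so \(fS=ghS\subseteq gS\). Conversely, suppose \(fS\subseteq gS\), and assume \(f,g\ne0\) (the cases with \(f=0\) or \(g=0\) are trivial, using injectivity). Since \(g\) is injective, \(g\colon S\to gS\) is an isomorphism. Set \(h=g^{-1}f\in E\); then \(f=gh\).

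For the right Ore condition and the intersection statement, take nonzero \(f,g\). By uniformity \(fS\cap gS\ne0\), so \(fS\cap gS=kS\) for some nonzero \(k\in E\). By the equivalence, \(kE\subseteq fE\cap gE\), and in particular \(fE\cap gE\ne0\), which is the right Ore condition. For the reverse inclusion, let \(u\in fE\cap gE\). Then \(uS\subseteq fS\cap gS=kS\), so \(u\in kE\) by the equivalence. Hence \(fE\cap gE=kE\) is principal.
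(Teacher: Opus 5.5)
Your proof is correct. Cyclicity, the Noetherian property, the factorization \(fE\subseteq gE\Leftrightarrow fS\subseteq gS\) via \(g^{-1}\colon gS\to S\), and \(fE\cap gE=kE\) from \(fS\cap gS=kS\) are argued exactly as in the paper; the genuine difference is uniformity.

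The paper proves injectivity first: for \(f\neq0\), an isomorphism \(fS\to S\) composed with \(f\) is a surjective endomorphism of the Noetherian module \(S\), hence injective. Then, given \(U\cap V=0\), it takes an injective \(f\) with \(fS=U\) and shows that \(V,fV,f^2V,\ldots\) are independent, contradicting the ascending chain condition. You instead apply isosimplicity to \(A\oplus B\) itself, getting \(S\cong S\oplus S\) and hence arbitrarily long direct sums in \(S\). Your route makes uniformity independent of injectivity and is a little shorter. In the uniformity step, the paper's route uses isosimplicity only to produce \(f\), and it actually proves the general fact that an injective endomorphism of a Noetherian module has essential image.

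One presentational point concerns your detour through \(\ker h\subsetneq\ker h^2\subsetneq\cdots\), which is not justified as written. The identity \(h(\ker h^{n+1})=\ker h^n\cap hS\) together with \(\ker h\cap hS\neq0\) gives only the first strict inclusion. The chain that is automatically strict is the one of kernels of powers of the surjection \(\psi\circ h\), where \(\psi\colon hS\to S\) is an isomorphism. The Hopfian argument you finally rely on is exactly the paper's and needs no uniformity, so you can drop both the chain and the reordering.
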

	\begin{proof}
		Pick \(0\neq s\in S\). Since \(sR\cong S\), the module \(S\) is cyclic. Hence
		every submodule is either zero or cyclic, so \(S\) is Noetherian. If \(f\neq0\),
		then an isomorphism \(fS\to S\) composed with \(f\) gives a surjective
		endomorphism of the Noetherian module \(S\). It must be injective, so \(f\) is
		injective.
		
		If nonzero submodules \(U,V\) had zero intersection, choose \(f\) with image
		\(U\). Then \(V,fV,f^2V,\ldots\) form a direct sum: \(V\cap fS=0\), and
		injectivity of \(f\) gives the claim by induction. Their finite partial sums
		contradict the ascending chain condition. Therefore \(S\) is uniform.
		
		The factorization statement follows by using the inverse of the isomorphism
		\(g:S\to gS\). If \(f,g\neq0\), write \(fS\cap gS=hS\) with \(h\neq0\).
		Factoring through \(f\) and \(g\) gives \(h=fa=gb\), and the same argument for
		any element of \(fE\cap gE\) shows that this intersection is \(hE\). This also
		proves the right Ore condition.
	\end{proof}
	
	\begin{corollary}\label{cor:UFD}
		If \(\End_R(S)\) is commutative, then it is a UFD.
	\end{corollary}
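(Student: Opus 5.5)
Throughout, \(E=\End_R(S)\) is assumed commutative. By Lemma~\ref{lem:basic}, \(E\) is then a commutative domain in which the intersection of any two nonzero principal ideals is principal. So any two nonzero elements have an lcm, and hence (by the usual identity \(ab=\mathrm{lcm}\cdot\gcd\)) a gcd. Such a domain is a GCD domain, and in a GCD domain every irreducible element is prime. It therefore remains to show that \(E\) is atomic, i.e. that every nonzero nonunit is a finite product of irreducibles. The plan is to prove that \(E\) satisfies ACC on principal ideals, and then run the standard argument that ACCP together with primality of irreducibles gives a UFD.

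For ACCP, use the order-reversing correspondence \(fE\subseteq gE \iff fS\subseteq gS\) from Lemma~\ref{lem:basic}. An ascending chain \(f_1E\subseteq f_2E\subseteq\cdots\) of nonzero principal ideals gives an ascending chain \(f_1S\subseteq f_2S\subseteq\cdots\) of submodules of \(S\). Since \(S\) is Noetherian (Lemma~\ref{lem:basic}), this chain stabilizes, \(f_nS=f_{n+1}S\) for all large \(n\), and by the equivalence \(f_nE=f_{n+1}E\). So ACCP holds. Atomicity then follows in the usual way: if some nonzero nonunit had no factorization into irreducibles, we could split it repeatedly as \(a=bc\) with \(b,c\) nonunits and one factor again unfactorizable. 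This produces a strictly ascending chain of principal ideals, a contradiction.

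For uniqueness, the GCD property gives Euclid's lemma. If \(p\) is irreducible and \(p\mid ab\) with \(p\nmid a\), then \(\gcd(p,a)=1\), and in a GCD domain \(\gcd(pb,ab)=b\gcd(p,a)=b\). Since \(p\) divides both \(pb\) and \(ab\), it divides \(b\). So irreducibles are prime, and uniqueness of factorization follows by the standard induction.

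The main obstacle I expect is the passage from lcm's to gcd's. In a domain where every two elements have an lcm, one sets \(d=ab/\mathrm{lcm}(a,b)\) and checks directly that \(d\) is a gcd. Some care is needed to verify that any common divisor \(e\) of \(a,b\) divides \(d\): use that \(ab/e\) is a common multiple of \(a\) and \(b\), hence a multiple of the lcm. Alternatively, one can avoid gcd's by showing directly that an irreducible \(p\) is prime. If \(p\nmid a\), then \(pE\cap aE=paE\), because the lcm \(m\) of \(p,a\) divides \(pa\) and \(m/a\) divides \(p\), so \(m/a\) is a unit or an associate of \(p\). From this, \(p\mid ab\) together with \(a\mid ab\) gives \(pa\mid ab\), hence \(p\mid b\). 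This second route is cleaner, and I would use it.
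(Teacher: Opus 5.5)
Your proof is correct and follows the paper's route. Least common multiples come from principal intersections (Lemma~\ref{lem:basic}); they give the GCD property, or in your variant directly make irreducibles prime. ACCP comes from the correspondence \(fE\subseteq gE\iff fS\subseteq gS\) and the Noetherian property of \(S\), and the standard GCD-plus-ACCP criterion finishes, which you spell out in detail. One small slip: that correspondence is order-preserving, not order-reversing, and order-preserving is how you actually use it.
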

	\begin{proof}
		Lemma~\ref{lem:basic} gives least common multiples as intersections of principal
		ideals. In a commutative domain this is the same as the GCD property. The
		correspondence \(fE\leftrightarrow fS\) gives the ascending chain condition on
		principal ideals. A GCD domain with this condition is a UFD; this is the usual
		factorization criterion \cite{Bourbaki}.
	\end{proof}
	
	\begin{lemma}\label{lem:projection}
		Every nonzero submodule \(N\leq S^n\) embeds, under some coordinate projection,
		as an essential submodule of \(S^r\), where \(r=\udim N\leq n\).
	\end{lemma}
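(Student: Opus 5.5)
The plan is to use the standard uniform dimension argument inside \(S^n\), relying on Lemma~\ref{lem:basic}, which says \(S\) is uniform. Then \(S^n\) has uniform dimension \(n\), and every nonzero \(N\leq S^n\) has finite uniform dimension \(r\leq n\). For a subset \(I\subseteq\{1,\dots,n\}\), write \(\pi_I:S^n\to S^I\cong S^{|I|}\) for the coordinate projection. The goal is to find \(I\) with \(|I|=r\) such that \(\pi_I|_N\) is injective and \(\pi_I(N)\) is essential in \(S^I\).

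First, choose \(J\subseteq\{1,\dots,n\}\) maximal subject to \(N\cap S^J=0\), where \(S^J\) denotes the sum of the coordinate copies indexed by \(J\). Let \(I\) be the complement of \(J\); then \(\ker(\pi_I|_N)=N\cap S^J=0\), so \(\pi_I\) embeds \(N\) into \(S^I\).

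Next, show that \(N\oplus S^J\) is essential in \(S^n\). By maximality, for each \(j\notin J\) we have \(N\cap S^{J\cup\{j\}}\neq0\). Take a nonzero \(x=y+s\) in this intersection with \(y\in S^J\) and \(s\in S_j\). Then \(s\neq0\), since otherwise \(x\in N\cap S^J=0\). Hence \(s=x-y\in (N\oplus S^J)\cap S_j\), so \((N\oplus S^J)\cap S_j\neq 0\). Because \(S_j\) is uniform, this intersection is essential in \(S_j\). The sum \(N\oplus S^J\) therefore meets every coordinate copy \(S_i\) essentially: for \(i\in J\) trivially, and for \(i\in I\) by the above. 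A finite direct sum of essential submodules \(U_i\leq S_i\) is essential in \(\bigoplus S_i\), and \(\bigoplus_i\bigl((N\oplus S^J)\cap S_i\bigr)\subseteq N\oplus S^J\), so \(N\oplus S^J\ess S^n\).

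Then project. Since \(\pi_I\) is the projection along \(S^J\), the image of an essential submodule containing \(S^J\) is essential: if \(0\neq V\leq S^I\), then \(V\oplus S^J\) meets \(N\oplus S^J\) outside \(S^J\), which gives \(\pi_I(N)\cap V\neq0\). Thus \(\pi_I(N)\ess S^I\). Finally, count dimensions. We have \(\udim N=\udim\pi_I(N)=\udim S^I=|I|\), so \(|I|=r\). This also shows \(r\leq n\).

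The only mildly delicate step is the essentiality of \(N\oplus S^J\). The intersection-with-coordinates argument handles it using only uniformity of \(S\), and I expect no real obstacle beyond that.
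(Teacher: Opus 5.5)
Your proof is correct and is essentially the paper's argument. A maximal \(J\) with \(N\cap S^J=0\) is the same as a coordinate set \(I\) from which no coordinate can be removed while keeping \(\pi_I|_N\) injective, which is exactly the property the paper uses together with uniformity of the axes; your detour through \(N\oplus S^J\ess S^n\) can be skipped, because your element \(s\) equals \(\pi_I(x)\) and so already lies in \(\pi_I(N)\cap S_j\).
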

	\begin{proof}
		Choose a set of coordinates of smallest possible size such that the projection
		onto them is injective on \(N\), and let \(L\leq S^r\) be its image. Removing
		any coordinate destroys injectivity. Therefore \(L\) has nonzero intersection
		with every coordinate axis. These intersections form an essential submodule of
		\(S^r\), because each coordinate is uniform. Hence \(L\ess S^r\), and its
		uniform dimension is \(r\).
	\end{proof}
	
	\begin{proposition}\label{prop:criterion}
		For every positive integer \(n\), the following are equivalent:
		\begin{enumerate}[label=\textup{(\roman*)}]
			\item \(S^n\) is isoartinian;
			\item every submodule of \(S^n\) is isomorphic to \(S^r\) for some
			\(0\leq r\leq n\).
		\end{enumerate}
		If these conditions hold, then every essential submodule of \(S^n\) is
		isomorphic to \(S^n\).
	\end{proposition}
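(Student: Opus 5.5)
The implication (ii)\(\Rightarrow\)(i) is the easy direction, and I would handle it first. Assume every submodule of \(S^n\) is isomorphic to some \(S^r\) with \(0\le r\le n\), and take a descending chain \(M_0\supseteq M_1\supseteq\cdots\) in \(S^n\). Write \(M_i\cong S^{r_i}\). Uniform dimension is monotone on submodules, and by Lemma~\ref{lem:basic} each copy of \(S\) is uniform, so \(\udim S^{r}=r\). Hence \(r_0\ge r_1\ge\cdots\) is a nonincreasing sequence of integers in \([0,n]\). It is therefore eventually constant, say from index \(d\) on, and then \(M_i\cong S^{r_d}\cong M_d\) for all \(i\ge d\). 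So \(S^n\) is isoartinian.

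For (i)\(\Rightarrow\)(ii), take a nonzero \(N\le S^n\) and put \(r=\udim N\). By Lemma~\ref{lem:projection}, \(N\) is isomorphic to an essential submodule \(L\) of \(S^r\). A submodule of a direct summand of \(S^n\) is again a submodule of \(S^n\), so \(S^r\) is isoartinian. It therefore suffices to prove the final assertion: if \(S^m\) is isoartinian, then every essential \(L\le S^m\) is isomorphic to \(S^m\). Applying this with \(m=r\) gives \(N\cong L\cong S^r\).

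To prove the final assertion, I would construct a descending chain with \(S^m\) at the top that interleaves copies of \(L\) and of \(S^m\). Fix an essential \(L\le S^m\) and set \(L_0=S^m\), \(L_1=L\). I need \(L\) to contain a copy of \(S^m\). Since \(L\) is essential, \(L\cap S e_j\ne0\) for each coordinate axis \(Se_j\). Because \(S\) is isosimple, this intersection contains a submodule \(T_j\cong S\), and then \(T_1\oplus\cdots\oplus T_m\le L\) is a submodule isomorphic to \(S^m\). Choose an isomorphism \(\phi:S^m\to T_1\oplus\cdots\oplus T_m\) and define
\[
M_0=S^m,\quad M_1=L,\quad M_2=\phi(S^m),\quad M_3=\phi(L),\quad M_4=\phi^2(S^m),\ \ldots
\]
This is descending, since \(\phi(S^m)\subseteq L\subseteq S^m\) and applying the injective map \(\phi\) preserves inclusions. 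The even terms are isomorphic to \(S^m\) and the odd terms to \(L\). Isoartinianity forces the isomorphism types to stabilize, so \(L\cong S^m\).

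The step needing the most care is producing \(\phi\) with image inside \(L\). This uses essentiality of \(L\) coordinatewise, together with the fact that the sum \(T_1\oplus\cdots\oplus T_m\) is direct because it sits inside distinct coordinate axes. Once \(\phi\) exists, the interleaving trick does the rest.
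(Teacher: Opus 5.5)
Your proof is correct and follows the paper's argument. It uses the same interleaved chain \(S^m\supseteq L\supseteq\phi(S^m)\supseteq\phi(L)\supseteq\phi^2(S^m)\supseteq\cdots\) for an essential \(L\), applies Lemma~\ref{lem:projection} to reduce (i)\(\Rightarrow\)(ii) to that case inside the isoartinian submodule \(S^r\), and uses monotonicity of uniform dimension for (ii)\(\Rightarrow\)(i).
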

	\begin{proof}
		Let \(L\ess S^n\). Its intersection with each axis contains a copy of \(S\), so
		there is a diagonal monomorphism \(F:S^n\to S^n\) with \(F(S^n)\subseteq L\).
		The chain
		\[
		S^n\supseteq L\supseteq F(S^n)\supseteq F(L)
		\supseteq F^2(S^n)\supseteq\cdots
		\]
		alternates between the isomorphism types of \(S^n\) and \(L\). Thus
		isoartinianity implies \(L\cong S^n\).
		
		Assume (i). Every smaller power \(S^r\) is isoartinian, since it is a submodule
		of \(S^n\). Apply Lemma~\ref{lem:projection} to a nonzero \(N\leq S^n\), and
		then apply the previous argument inside \(S^r\). This gives (ii).
		Conversely, under (ii) the integers \(\udim M_i\) in any descending chain
		eventually stabilize. The corresponding powers of \(S\) are then isomorphic,
		so the chain stabilizes up to isomorphism.
	\end{proof}
	
	\begin{remark}
		The criterion uses essential submodules and coordinate projections, not
		cancellation of arbitrary summands. In particular, a nonsplit short exact
		sequence cannot be treated as a direct-sum decomposition.
	\end{remark}
	
	\section{The commutative case}
	
In this section, we will show that the square \(S^2\) is always isoartinian when \(S\) is an isosimple module over a commutative ring.
	
	\begin{theorem}\label{thm:comm}
		Let \(R\) be commutative and \(S\neq0\) a right \(R\)-module. The following are
		equivalent:
		\begin{enumerate}[label=\textup{(\roman*)}]
			\item \(S\) is isosimple;
			\item there is a prime ideal \(\mathfrak p\) such that
			\(R/\mathfrak p\) is a principal ideal domain and
			\(S\cong R/\mathfrak p\).
		\end{enumerate}
		In this case every finite power \(S^n\) is isoartinian.
	\end{theorem}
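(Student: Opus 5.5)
For (i)\(\Rightarrow\)(ii) I will use Lemma~\ref{lem:basic}. Since \(S\) is cyclic, \(S\cong R/I\) with \(I=\ann_R(S)\); commutativity gives the annihilator equality. The idea is to realize every nonzero submodule of \(R/I\) as an ideal \(J/I\) of the ring \(\bar R=R/I\), and to use that isomorphic modules over a commutative ring have equal annihilators.
\begin{itemize}
\item[(a)] Take \(0\neq J/I\leq \bar R\). Then \(J/I\cong \bar R\), so \(\ann_{\bar R}(J/I)=\ann_{\bar R}(\bar R)=0\).
\item[(b)] Hence every nonzero element \(\bar x\) of \(\bar R\) generates an ideal \(\bar x\bar R\) with zero annihilator, so \(\bar x\) is a nonzerodivisor. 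Thus \(\bar R\) is a domain and \(I=\mathfrak p\) is prime.
\item[(c)] Every nonzero ideal \(J/I\) is isomorphic to \(\bar R\), hence cyclic, i.e.\ principal. So \(\bar R\) is a PID.
\end{itemize}

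For (ii)\(\Rightarrow\)(i), let \(D=R/\mathfrak p\). The \(R\)-submodules of \(D\) are exactly its ideals, since \(R\) acts through \(D\). Every nonzero ideal is \(dD\) with \(d\neq0\), and multiplication by \(d\) gives an isomorphism \(D\cong dD\) because \(D\) is a domain.

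For the final assertion I will use Proposition~\ref{prop:criterion}. Submodules of \(S^n=D^n\) as \(R\)-modules are exactly the \(D\)-submodules of \(D^n\), and \(R\)-isomorphisms between such modules coincide with \(D\)-isomorphisms. Over a PID, every submodule of the free module \(D^n\) is free of rank \(r\leq n\); this is the standard structure theorem. Hence every submodule is isomorphic to \(S^r\), and condition (ii) of Proposition~\ref{prop:criterion} yields that \(S^n\) is isoartinian.

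The only slightly delicate step is (a)–(b): showing that \(\mathfrak p\) is prime from isosimplicity. It rests on isomorphism invariance of annihilators, which is exactly where commutativity is used. Everything else is routine.
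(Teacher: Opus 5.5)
Your proof is correct and follows the paper's outline: cyclicity gives \(S\cong R/I\), nonzero ideals of \(\bar R\) are isomorphic to \(\bar R\) and hence principal, the converse uses multiplication by a generator, and the final assertion combines the free-submodule theorem over a PID with Proposition~\ref{prop:criterion}. The one difference is how you show \(\bar R\) is a domain: you use isomorphism invariance of annihilators, whereas the paper identifies \(\bar R\cong\End_R(S)\) and uses the injectivity of nonzero endomorphisms from Lemma~\ref{lem:basic}. Also, annihilator invariance holds over any ring; commutativity is actually used in \(\ann_R(s)=\ann_R(S)\) and in reading \(\ann(\bar x\bar R)=0\) as saying that \(\bar x\) is a nonzerodivisor.
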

	\begin{proof}
		Assume \(S\) is isosimple. By Lemma~\ref{lem:basic}, \(S\cong R/I\) for some
		ideal \(I\). Put \(D=R/I\). Endomorphisms of the cyclic module \(S\) are given
		by multiplication by elements of \(D\), so \(D\cong\End_R(S)\) is a domain.
		Every nonzero ideal \(J\) of \(D\) is isomorphic to \(D\). If \(\theta:D\to J\)
		is an isomorphism, then \(J=\theta(1)D\). Hence \(D\) is a principal ideal
		domain and \(I\) is prime.
		
		Conversely, every nonzero ideal of a principal ideal domain is isomorphic to the
		regular module, since it is generated by one element. Thus \(R/\mathfrak p\) is
		isosimple as an \(R\)-module. For every \(N\leq S^n\), the action factors
		through \(D\), and the submodule theorem for free modules over a PID gives
		\(N\cong D^r\) with \(r\leq n\). See \cite{Lam99} for this standard theorem.
		Now Proposition~\ref{prop:criterion} applies.
	\end{proof}
	
	This agrees with the commutative classifications of virtually simple and
	completely virtually semisimple modules in \cite{BDV18a,BDV18b}. The proof also
	shows why no Noetherian assumption on the original ring \(R\) is needed: it is
	the faithful quotient \(R/\ann_R(S)\), not \(R\) itself, that must be a PID.
	
	\begin{example}\label{ex:comm}
		For any commutative ring \(R\), every prime quotient that is a PID gives a
		positive example. This includes residue fields, polynomial quotients isomorphic
		to \(k[t]\), and quotients isomorphic to a discrete valuation ring. In
		particular, over \(k[x_1,\ldots,x_d]\), the module obtained from a surjection
		onto \(k[t]\) has isoartinian finite powers. Nilpotents or non-Noetherian
		behavior in the kernel do not affect the conclusion.
	\end{example}
	
	It is important that we only take copies of the same module.
	
	\begin{example}\label{ex:mixed}
		Let \(k\) be a field, \(R=k[x,y]/(xy)\), and
		\[
		U=R/(y)\cong k[x],\qquad V=R/(x)\cong k[y].
		\]
		Both modules are isosimple, but \(M=U\oplus V\) is not isoartinian.
		The map
		\[
		R\longrightarrow M,\qquad
		r\longmapsto(r+(y),r+(x))
		\]
		is injective. Its image is
		\[
		N=\{(f(x),g(y)):f(0)=g(0)\}.
		\]
		Thus \(N\) is cyclic. The module \(M\) is not cyclic, since
		\(M/(x,y)M\cong k^2\), whereas a cyclic \(R\)-module has a quotient
		of dimension at most one modulo \((x,y)\).
		
		Multiplication by \(a=x+y\) is injective on \(M\), and
		\(aM=xk[x]\oplus yk[y]\subseteq N\). The descending chain
		\[
		M\supseteq N\supseteq aM\supseteq aN
		\supseteq a^2M\supseteq a^2N\supseteq\cdots
		\]
		alternates between two different isomorphism types.
	\end{example}
	
	In particular, the positive result for \(S^2\) over commutative rings does not
	answer the question about all finite sums of isosimple modules. The decomposition
	results in \cite{BDV18b} concern modules that are already known to admit the
	relevant decompositions; they cannot be used to claim closure under arbitrary
	finite direct sums.
	
	\section{A noncommutative counterexample}
	
In this section, we will show that the square \(S^2\) may be not isoartinian even when \(S\) is an isosimple module over a non-commutative ring.	We use a family of orders in matrix rings. Their rings of fractions are simple
	artinian, but the residue algebra is deliberately smaller than a full matrix
	algebra. This discrepancy produces the obstruction. The quotient-ring viewpoint
	is standard in the theory of orders \cite{Goldie60,MR01}; all calculations
	needed below are explicit.
	
	Let \(L/k\) be a finite field extension of degree \(d>1\).
	Represent \(L\) in \(M_d(k)\) so that \(k^{1\times d}\) is its
	one-dimensional right regular module. Put
	\[
	O=k[[\pi]],\qquad F=k((\pi)),\qquad
	A=L+\pi M_d(O)\subseteq M_d(O),\qquad
	S=O^{1\times d}.
	\]
	The action on \(S\) is ordinary multiplication of row vectors by
	matrices.
	
	\begin{theorem}\label{thm:counter}
		The ring \(A\) is a noncommutative complete local prime ring, finite over the
		central DVR \(O\), and it is left and right Noetherian. The module \(S_A\) is
		faithful and isosimple, and
		\[
		\End_A(S)\cong O.
		\]
		Nevertheless \(S^2\) is not isoartinian.
	\end{theorem}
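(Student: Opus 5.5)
We establish the ring-theoretic properties of \(A\) first, then isosimplicity and the endomorphism ring, and finally exhibit a submodule of \(S^2\) not isomorphic to \(S^r\) for any \(r\), so that Proposition~\ref{prop:criterion} shows \(S^2\) is not isoartinian.

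\textbf{Ring properties.} Since \(L\subseteq M_d(k)\subseteq M_d(O)\) consists of scalar-free constant matrices and \(\pi M_d(O)\) is an ideal of \(M_d(O)\), the set \(A\) is a subring. It contains \(O=k+\pi O\) centrally, because \(k\subseteq L\) as scalars. It contains \(\pi M_d(O)\), so \(A\) is an \(O\)-submodule of the finitely generated free module \(M_d(O)\). Hence \(A\) is finitely generated over the Noetherian ring \(O\), and is left and right Noetherian. Noncommutativity is clear since \(\pi M_d(O)\subseteq A\) and \(d>1\).

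The ideal \(J=\pi M_d(O)\) satisfies \(A/J\cong L\), a field, and \(J^n=\pi^nM_d(O)\subseteq \pi^{n}A\). So \(J\) is the Jacobson radical, \(A\) is local, and \(A\) is \(\pi\)-adically complete because it is a finite \(O\)-module. For primeness, note that \(A\supseteq \pi M_d(O)\) gives \(FA=M_d(F)\). If \(I,I'\) are nonzero ideals, then \(FI\) and \(FI'\) are nonzero ideals of the simple ring \(M_d(F)\). Hence \(\pi^mM_d(O)\subseteq I\cap I'\) for some \(m\), and \(II'\neq0\).

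\textbf{Isosimplicity and endomorphisms.} Faithfulness of \(S\) holds because \(A\subseteq M_d(O)\) acts by matrix multiplication on \(O^{1\times d}\). Let \(0\neq N\leq S\).
\begin{itemize}
\item Since \(N\) contains \(N\pi M_d(O)\), and \(vM_d(O)=O^{1\times d}\) for any primitive vector \(v\), we get \(N\supseteq\pi^{m}S\) for some \(m\).
\item Choose \(m\) minimal with \(N\subseteq \pi^{m}S\). The image \(\bar N\) of \(\pi^{-m}N\) in \(S/\pi S=k^{1\times d}\) is a nonzero \(A/J=L\)-submodule, hence everything, since \(k^{1\times d}\) is one-dimensional over \(L\).
\item By Nakayama over \(O\), \(\pi^{-m}N+\pi S=S\) forces \(\pi^{-m}N=S\). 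Thus \(N=\pi^mS\cong S\).
\end{itemize}
An endomorphism \(f\) of \(S\) is \(O\)-linear and commutes with \(\pi M_d(O)\), hence with \(M_d(O)\) after inverting \(\pi\). So \(f\) is a scalar of \(F\) preserving \(O^{1\times d}\), i.e.\ an element of \(O\). This gives \(\End_A(S)\cong O\).

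\textbf{Non-isoartinianity.} This is the main step. The natural candidate is the essential submodule
\[
N=\{(x,y)\in S^2 : \bar y=\bar x\lambda \text{ in } k^{1\times d}\},
\]
for some \(\lambda\in M_d(k)\) that commutes with \(L\) but is not a scalar. One must first check that \(N\) is an \(A\)-submodule. More canonically, choose an \(L\)-linear map \(k^{1\times d}\to k^{1\times d}\) not in \(k\). Since \(L\) is its own centralizer in \(M_d(k)\), take \(\lambda\in L\setminus k\), which exists because \(d>1\).

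Any isomorphism \(N\cong S^2\) would extend, by tensoring with \(F\), to an \(M_d(F)\)-automorphism of \(F^{1\times d}\oplus F^{1\times d}\). Such an automorphism is given by a matrix \(g\in GL_2(F)\) acting by \(\End_A(S)\otimes F=F\)-scalars, with \(N=S^2g\) as an \(O\)-lattice in \(F^{1\times 2d}\). Lattices of the form \(S^2g\) are exactly \(O^{1\times2}g\otimes_O S\), so the question reduces to deciding whether \(N\) has this tensor shape.

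We use the invariant \(N/\pi N\) together with the image of \(N\) modulo \(\pi S^2\). For \(S^2g\), the reduction data are defined over \(k\) through \(\bar g\), in the sense of a \(k\)-subspace of \(k^2\) tensored with \(k^{1\times d}\). For \(N\), the reduction is the graph of \(\lambda\), which is not of the form \(W\otimes_k k^{1\times d}\) for any \(W\leq k^2\) because \(\lambda\notin k\).

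The delicate point is to normalize \(g\) via elementary divisors over \(O\) so that \(S^2\supseteq S^2g\supseteq \pi S^2\). We would do this by using \(\pi S^2\subseteq N\subseteq S^2\) and comparing indices. Once normalized, the contradiction is immediate. Finally, \(N\not\cong S\) and \(N\neq 0\) by uniform dimension, so Proposition~\ref{prop:criterion}(ii) fails, and \(S^2\) is not isoartinian.
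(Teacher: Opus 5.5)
Your proposal is correct. The ring-theoretic part, isosimplicity and \(\End_A(S)\cong O\) go essentially as in the paper, up to harmless variations such as proving primeness through nonzero ideals and simplicity of \(FA=M_d(F)\). Two slips do not affect anything: you want the \emph{largest} \(m\) with \(N\subseteq\pi^mS\), and \(J^n=\pi^nM_d(O)\) lies in \(\pi^{n-1}A\), not \(\pi^nA\).

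The real difference is in showing \(N\not\cong S^2\). The paper stays over \(O\): since \(\End_A(S)=O\), every map \(S\to S^2\) is \(s\mapsto(as,bs)\) with \(a,b\in O\). Such a map lands in \(N_T\) only if \(\bar aI_d=\bar bT\), which forces \(a,b\in\pi O\). Hence \(\Tr(S,N_T)=\pi S^2\neq N_T\), whereas anything isomorphic to \(S^2\) is generated by \(S\).

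You instead rationalize, writing an isomorphism as some \(g\in GL_2(F)\) with \(N=S^2g\), and then compare reductions modulo \(\pi\). This works, but the step you flag as delicate is automatic; no elementary divisors or index count are needed. Testing \(S^2g=N\subseteq S^2\) on \((e,0)\) and \((0,e)\), for a basis vector \(e\) of \(O^{1\times d}\), already gives \(g\in M_2(O)\). Then \(N/\pi S^2=W\otimes_k k^{1\times d}\), with \(W\) the image of \(\bar g\). Comparing this with the graph of \(\lambda\) forces \(\dim_kW=1\), and then \(\lambda\) must be scalar.

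Both arguments show that \(N\) is not of the form \(\Lambda\otimes_O S\) for an \(O\)-lattice \(\Lambda\subseteq F^2\). The paper's trace formulation is shorter and uses nothing beyond \(\End_A(S)=O\). Yours additionally needs \(A\otimes_O F=M_d(F)\) and Schur's lemma over \(M_d(F)\), but it makes the obstruction more visible: the reduction of \(N\) is not defined over \(k\).
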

	\begin{proof}
		As an \(O\)-module, \(A\) lies between \(\pi M_d(O)\) and \(M_d(O)\), and it is
		the inverse image of \(L\) under reduction modulo \(\pi\). It is finite over
		\(O\), complete, and Noetherian on both sides. An element whose residue in
		\(L\) is nonzero has an inverse in \(M_d(O)\); its residue is the inverse in
		\(L\), so its inverse lies in \(A\). An element in \(\pi M_d(O)\) is not a
		unit. Therefore
		\[
		\Jac(A)=\pi M_d(O),\qquad A/\Jac(A)\cong L.
		\]
		Noncommutativity follows from the presence of \(\pi M_d(O)\).
		
		To prove primeness, if \(0\neq a,b\in A\), choose
		\(c\in M_d(F)\) with \(acb\neq0\). Multiplying \(c\) by a
		sufficiently high power of \(\pi\) puts it in \(\pi M_d(O)\subseteq A\).
		Thus \(aAb\neq0\). Faithfulness of the row representation is immediate.
		
		Let \(0\neq N\leq S_A\). The scalar matrices \(OI_d\) lie in \(A\), so \(N\)
		is an \(O\)-submodule. Choose the least valuation \(m\) of a nonzero coordinate
		of an element of \(N\). Then \(N\subseteq\pi^mS\), and the image of
		\(\pi^{-m}N\) in \(S/\pi S\) is nonzero. It is stable under \(L\). Since
		\(k^{1\times d}\) is simple as an \(L\)-module, this image is all of
		\(S/\pi S\). By Nakayama's lemma applied to the finite \(O\)-module
		\(S/\pi^{-m}N\), we get
		\[
		N=\pi^mS.
		\]
		Multiplication by \(\pi^m\) is an \(A\)-isomorphism \(S\to N\).
		
		An \(A\)-endomorphism of \(S\) is \(O\)-linear and hence is right
		multiplication by a matrix \(B\in M_d(O)\). It commutes with every
		\(\pi e_{ij}\). Since \(O\) is a domain, it commutes with all matrix units,
		so \(B\) is scalar. Conversely, scalar matrices give endomorphisms. This proves
		\(\End_A(S)=O\).
		
		Choose \(T\in L\setminus kI_d\), and define
		\[
		N_T=\{(u,v)\in S^2:\overline u=\overline vT\},
		\]
		where bars denote reduction modulo \(\pi\). This is an \(A\)-submodule,
		because \(L\) is commutative. Moreover
		\[
		\pi S^2\subsetneq N_T\subsetneq S^2.
		\]
		Any map \(S\to S^2\) has the form \(s\mapsto(as,bs)\), with \(a,b\in O\).
		Its image lies in \(N_T\) exactly when
		\[
		\overline a I_d=\overline bT.
		\]
		Since \(T\) is not scalar, this requires \(a,b\in\pi O\). Consequently the sum
		of all images of maps \(S\to N_T\) is
		\[
		\Tr(S,N_T)=\pi S^2\neq N_T.
		\]
		An isomorphism \(N_T\cong S^2\) would make \(N_T\) generated by \(S\), which
		is impossible. Finally
		\[
		S^2\supseteq N_T\supseteq\pi S^2\supseteq\pi N_T
		\supseteq\pi^2S^2\supseteq\cdots
		\]
		is a descending chain with alternating, distinct isomorphism types.
	\end{proof}
	
	\begin{example}\label{ex:explicit}
		Take \(k=\mathbb Q\), \(L=\mathbb Q(i)\), \(d=2\), and use
		\[
		L=\left\{
		\begin{pmatrix}a&b\\-b&a\end{pmatrix}:a,b\in\mathbb Q
		\right\}.
		\]
		Thus
		\[
		A=\left\{\begin{pmatrix}a&b\\-b&a\end{pmatrix}
		+\pi B:a,b\in\mathbb Q,\ B\in M_2(\mathbb Q[[\pi]])\right\}.
		\]
		With \(T=\left(\begin{smallmatrix}0&1\\-1&0\end{smallmatrix}\right)\),
		one may instead take the submodule defined by
		\(\overline v=\overline uT\). Sending a matrix of \(A\) to its two
		rows identifies this submodule with \(A_A\), so it is cyclic.
		On the other hand,
		\[
		S/S\Jac(A)\cong L,\qquad
		S^2/S^2\Jac(A)\cong L^2,
		\]
		and \(S^2\) cannot be cyclic. This gives a second direct proof that
		the two isomorphism types in the chain are different.
	\end{example}
	
	The example has several useful features. The module is cyclic and Noetherian,
	the ambient ring is prime and Noetherian, and the endomorphism ring is a
	commutative PID. None of these facts implies that the square is isoartinian.
	In particular, the condition that \(\End_R(S)\) be a principal right ideal
	domain should not be used as a substitute for quasi-projectivity in the next
	section.
	
	The construction also separates two questions about the endomorphism ring.
	Lemma~\ref{lem:basic} describes intersections of principal right ideals of
	\(E\). It does not say that every submodule of \(S^2\) is generated by maps
	from \(S\). The module \(N_T\) is an explicit failure of that generation
	property.
	
	\section{Affirmative cases over noncommutative rings}
	In this section, we will investigate when the square \(S^2\) is  isoartinian provided that \(S\) is an isosimple module over a non-commutative ring.
	\subsection{Quasi-projective modules and matrix rings}
	
	Recall that \(S\) is \emph{quasi-projective} if for every submodule \(U\),
	every map \(S\to S/U\) lifts through the natural map \(S\to S/U\). We use
	this definition rather than a projectivity assumption over the original ring.
	For the general theory see \cite{Wis91,Wis96}; the standard distinctions
	between projectivity and quasi-projectivity are also discussed in \cite{Lam99}.
	
	\begin{theorem}\label{thm:qp}
		If an isosimple module \(S\) is quasi-projective, then every submodule of
		\(S^n\) is isomorphic to a finite direct sum of copies of \(S\).
		In particular, all finite powers of \(S\) are isoartinian.
	\end{theorem}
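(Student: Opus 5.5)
The plan is to induct on \(n\), splitting off one copy of \(S\) at a time, with relative projectivity supplying the splitting. For \(n=1\) the statement is the definition of isosimplicity. For the inductive step, take \(N\leq S^n\) and let \(p:S^n\to S\) be the projection onto the last coordinate. Put \(K=N\cap S^{n-1}=\ker(p|_N)\), where \(S^{n-1}\) denotes the first \(n-1\) coordinates. By induction \(K\cong S^r\) for some \(r\leq n-1\). The image \(p(N)\) is a submodule of \(S\). If it is zero, then \(N=K\) and we are done. Otherwise \(p(N)\cong S\) by isosimplicity, so we have an exact sequence
\[
0\longrightarrow K\longrightarrow N\xrightarrow{\;p|_N\;} p(N)\longrightarrow 0,\qquad p(N)\cong S.
\]
It suffices to show that this sequence splits, because then \(N\cong S^r\oplus S\cong S^{r+1}\).

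For the splitting I will use relative projectivity in the sense of Wisbauer \cite{Wis91}. Recall that \(P\) is \(M\)-projective if every map \(P\to M/U\) lifts through \(M\to M/U\); quasi-projectivity of \(S\) says exactly that \(S\) is \(S\)-projective. I will invoke two standard closure facts from \cite[18.2]{Wis91}.
\begin{enumerate}[label=\textup{(\alph*)}]
\item If \(P\) is \(M_i\)-projective for finitely many \(M_i\), then \(P\) is \(\bigoplus M_i\)-projective.
\item If \(P\) is \(M\)-projective, then \(P\) is \(U\)-projective and \(M/U\)-projective for every \(U\leq M\).
\end{enumerate}
By (a), \(S\) is \(S^n\)-projective. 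By (b), \(S\) is then \(N\)-projective, since \(N\leq S^n\).

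Now fix an isomorphism \(h:S\to p(N)\). The map \(p|_N:N\to p(N)\) is an epimorphism from \(N\), so \(p(N)\cong N/K\) is a quotient of \(N\). Hence \(N\)-projectivity of \(S\) lets \(h\) lift to some \(g:S\to N\) with \(p\circ g=h\). Then \(g\circ h^{-1}:p(N)\to N\) is a section of \(p|_N\), the sequence splits, and \(N\cong K\oplus S\). This completes the induction. The final assertion follows from Proposition~\ref{prop:criterion}, since every submodule of \(S^n\) is isomorphic to some \(S^r\) with \(r\leq n\) by the uniform-dimension count.

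The main obstacle is the passage from quasi-projectivity of \(S\) alone to \(N\)-projectivity for an arbitrary submodule \(N\leq S^n\). This rests on the closure properties (a) and (b), and I intend to cite them from \cite{Wis91} rather than reprove them. If a self-contained argument is preferred, (b) is elementary: a map \(S\to N/U'\) (for \(U'\leq N\)) composes into \(S^n/U'\) and can be lifted there, though one must then check that the lift can be chosen to land in \(N\). The delicate part is (a), namely lifting through \(S^n\to S^n/U\) for an arbitrary \(U\), which is not coordinatewise. This is exactly where the standard proof in Wisbauer is needed.
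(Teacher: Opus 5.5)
Your proof is correct, but it runs the induction in the mirror-image way from the paper.

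\textbf{The paper's route.} The kernel is placed in a single coordinate, $U=N\cap(S\oplus 0)$, and the induction hypothesis is applied to the image $V\le S^{n-1}$, so $V\cong S^r$. The obstruction to splitting $N\to V$ is then a map $V\to S/U$ with $U\le S$. After writing $V=V_1\oplus\cdots\oplus V_r$ with each $V_i\cong S$, each restriction lifts by the bare definition of quasi-projectivity. The only closure fact used is the trivial one: a map out of a finite direct sum lifts if each restriction does.

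\textbf{Your route.} You put the kernel $K$ in $S^{n-1}$ and the image $p(N)\cong S$ in the last coordinate. The obstruction is then a map from $S$ into $S^{n-1}/K$, where $K$ is an arbitrary, non-coordinatewise submodule. That is why you need the less trivial closure property (a), direct sums in the codomain, together with (b).

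\textbf{Comparison.} Your route costs an external lemma. In exchange it gives a more conceptual intermediate statement: $S$ is $N$-projective for every $N\le S^n$, so every epimorphism from a submodule of $S^n$ onto a copy of $S$ splits. The paper's route is fully self-contained.

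\textbf{On your final paragraph.} The worry in (b) is vacuous. If $U'\le N$ and $g:S\to S^n$ lifts a map into $N/U'\subseteq S^n/U'$, then $g(s)\in N+U'=N$ automatically.

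Fact (a) also has a two-step proof. Take $U\le M_1\oplus M_2$ and $f:P\to (M_1\oplus M_2)/U$.
\begin{enumerate}
\item Lift the composite $P\to (M_1\oplus M_2)/(M_1+U)$, which is a quotient of $M_2$, to a map $g_2:P\to M_2$.
\item The difference between $f$ and the image of $g_2$ lands in $(M_1+U)/U\cong M_1/(M_1\cap U)$, so lift it to $g_1:P\to M_1$.
\end{enumerate}
Then $g_1+g_2$ lifts $f$. So your argument can also be made self-contained at little cost.
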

	\begin{proof}
		Proceed by induction on \(n\). Let
		\(N\leq S\oplus S^{n-1}\), let \(U=N\cap(S\oplus0)\), and let
		\(V\) be the image in \(S^{n-1}\). By induction \(V\cong S^r\).
		If \(U=0\), then \(N\cong V\). Otherwise \(U\cong S\).
		
		There is a well-defined homomorphism
		\[
		h:V\longrightarrow S/U,\qquad
		h(v)=s+U\quad\text{if }(s,v)\in N.
		\]
		Write \(V=V_1\oplus\cdots\oplus V_r\), with each \(V_i\cong S\).
		Quasi-projectivity lifts each restriction \(h|_{V_i}\) to a map
		\(V_i\to S\). Adding these lifts gives \(\widetilde h:V\to S\).
		Then \(v\mapsto(\widetilde h(v),v)\) is a section of \(N\to V\),
		and \(N\cong U\oplus V\). The case \(U=S\) is included, with
		\(S/U=0\). Proposition~\ref{prop:criterion} finishes the proof.
	\end{proof}
	
	\begin{corollary}\label{cor:easypositive}
		The conclusion holds for simple modules and for projective isosimple
		modules. A quasi-injective isosimple module is simple, so it also has
		the conclusion.
	\end{corollary}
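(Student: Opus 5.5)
The corollary has three claims. I will verify each one's hypothesis and then apply Theorem~\ref{thm:qp}; only the third needs real work.

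\emph{Simple modules.} A simple module \(S\) is isosimple, and it is quasi-projective. Its only submodules are \(0\) and \(S\). For \(U=S\) the quotient is zero and there is nothing to lift. For \(U=0\) the quotient map is the identity, so every map lifts trivially. Theorem~\ref{thm:qp} therefore applies. One could also argue directly: \(S^n\) is semisimple of finite length, hence artinian.

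\emph{Projective isosimple modules.} A projective module is quasi-projective. Indeed, any map \(S\to S/U\) lifts along the epimorphism \(S\to S/U\) by projectivity. So Theorem~\ref{thm:qp} applies again.

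\emph{Quasi-injective isosimple modules.} Let \(S\) be quasi-injective and isosimple, and suppose \(0\neq U\leq S\). I will show \(U=S\). Since \(S\) is isosimple there is an isomorphism \(\theta:U\to S\). Then \(\theta^{-1}:S\to U\subseteq S\) is a monomorphism with image \(U\), and \(\theta\) itself may be viewed as a map \(U\to S\). Quasi-injectivity extends \(\theta\) to an endomorphism \(g\in E\) with \(g|_U=\theta\). The map \(g\) is nonzero, so it is injective by Lemma~\ref{lem:basic}. Now \(g(U)=\theta(U)=S\), and injectivity forces
\[
g(S)\supseteq S \quad\text{and}\quad g^{-1}(g(U))=U .
\]
From \(g(U)=S=g(S)\) and injectivity of \(g\) we get \(U=S\). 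Hence \(S\) has no nonzero proper submodules, so \(S\) is simple, and the first case gives the conclusion.

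The only delicate point is this last extension step. One must extend \(\theta\), not \(\theta^{-1}\), and then use injectivity of the extension to compare \(g(U)\) with \(g(S)\). Everything else is immediate from the definitions.
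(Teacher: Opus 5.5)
Your proposal is correct and follows essentially the paper's argument. Simple and projective modules are quasi-projective, so Theorem~\ref{thm:qp} applies; in the quasi-injective case, the extension \(g\) of an isomorphism \(\theta:U\to S\) is injective with \(g(U)=g(S)=S\), which forces \(U=S\). The differences are cosmetic: the paper gets injectivity of \(g\) from its surjectivity (the Hopf property) rather than citing Lemma~\ref{lem:basic}, and in your display the inclusion \(g(S)\supseteq S\) follows from \(g(U)=S\), not from injectivity.
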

	\begin{proof}
		Simple modules are quasi-projective, since their only quotients are
		themselves and zero. Projective modules are quasi-projective. If \(S\)
		is quasi-injective and \(0\neq U\leq S\), an isomorphism \(U\to S\)
		extends to an endomorphism of \(S\). It is surjective, hence injective,
		so its restriction being onto forces \(U=S\).
	\end{proof}
	
	The last observation agrees with the corresponding result for
	quasi-injective virtually semisimple modules in \cite{BDV18b}.
	
	\begin{proposition}\label{prop:morita}
		An equivalence of module categories preserves isosimplicity,
		isoartinianity and quasi-projectivity. Consequently, if \(D\) is a
		principal right ideal domain and \(A=M_t(D)\), the standard row module
		\(D^{1\times t}\) is isosimple and all of its finite powers are
		isoartinian.
	\end{proposition}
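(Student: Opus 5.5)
The plan is to work directly with an equivalence \(\Phi:\mathrm{Mod}\text{-}R\to\mathrm{Mod}\text{-}A\) and use only its categorical features. The key fact is that an equivalence induces, for every module \(M\), an isomorphism of submodule lattices
\[
\{N\leq M\}\longrightarrow\{N'\leq \Phi(M)\},\qquad N\longmapsto \operatorname{im}\bigl(\Phi(N\hookrightarrow M)\bigr).
\]
This holds because monomorphisms in a module category are exactly the injective maps, and equivalences preserve monomorphisms, epimorphisms, kernels, cokernels and images. Moreover, \(N\cong N'\) implies \(\Phi(N)\cong\Phi(N')\), and \(\Phi\) reflects isomorphisms. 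Zero objects are also preserved.

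With this in hand, isosimplicity transfers immediately. \(\Phi(S)\neq0\), and every nonzero submodule of \(\Phi(S)\) has the form \(\Phi(U)\) with \(0\neq U\leq S\). Then \(U\cong S\) gives \(\Phi(U)\cong\Phi(S)\).

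Isoartinianity transfers the same way. A descending chain in \(\Phi(M)\) pulls back, via the lattice isomorphism, to a descending chain in \(M\). Stabilization up to isomorphism then transfers forward.

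For quasi-projectivity, the definition is purely diagrammatic. Given \(U\leq S\), the quotient \(\Phi(S)\to\Phi(S)/\Phi(U)\) is \(\Phi\) of the cokernel map \(S\to S/U\). Any map \(\Phi(S)\to\Phi(S/U)\) is \(\Phi(g)\) for some \(g\), because \(\Phi\) is full. A lift of \(g\) then maps to a lift of \(\Phi(g)\). Applying the quasi-inverse gives the converse. Direct sums are biproducts, so \(\Phi(S^n)\cong\Phi(S)^n\).

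For the consequence, I take the standard Morita equivalence \(-\otimes_D D^{1\times t}:\mathrm{Mod}\text{-}D\to\mathrm{Mod}\text{-}M_t(D)\). It sends \(D_D\) to the row module \(D^{1\times t}\). The module \(D_D\) is isosimple: every nonzero right ideal is \(aD\), and left multiplication by \(a\) is an isomorphism \(D\to aD\) since \(D\) is a domain. It is also projective, hence quasi-projective, so Theorem~\ref{thm:qp}, or simply Corollary~\ref{cor:easypositive}, shows all finite powers \(D^n\) are isoartinian. The transfer results of the first part then give the same conclusions for \(D^{1\times t}\) and its powers. Alternatively, one can argue directly that submodules of free \(D\)-modules are free, since \(D\) is right hereditary, with rank at most \(n\) by uniform dimension.

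The main obstacle is the lattice correspondence, specifically that the image of \(\Phi\) applied to an inclusion recovers every submodule exactly once. I would argue this via the quasi-inverse \(\Psi\). A submodule \(N'\hookrightarrow\Phi(M)\) gives a mono \(\Psi(N')\to\Psi\Phi(M)\cong M\). Its image \(N\) then satisfies \(\Phi(N)\to\Phi(M)\) with image equal to \(N'\), because both define the same subobject. Injectivity of the correspondence follows since isomorphic subobjects of \(M\) are equal submodules.
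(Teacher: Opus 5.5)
Your proof is correct and follows the paper's argument. The equivalence induces an order-isomorphism of submodule lattices and preserves isomorphisms, finite direct sums and the lifting property, and the Morita equivalence sends \(D_D\) to the row module; the only difference is that you obtain isoartinianity of the powers \(D^n\) from projectivity of \(D_D\) via Theorem~\ref{thm:qp} (or Corollary~\ref{cor:easypositive}), whereas the paper cites the free-submodule theorem over a principal right ideal domain, which you also give as an alternative.
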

	\begin{proof}
		An equivalence is exact, preserves finite sums, and induces an
		order-isomorphism between the subobject lattices of corresponding
		objects. It preserves isomorphisms and the lifting property defining
		quasi-projectivity. This proves the first assertion.
		The usual Morita equivalence between \(D\) and \(M_t(D)\) sends
		\(D_D\) to the row module. Every right ideal of \(D\) is principal
		and isomorphic to \(D\), and every submodule of \(D^n\) is free of
		rank at most \(n\). These are the standard free-submodule and Morita
		theorems; see \cite{AF92,Lam99}.
	\end{proof}
	
	\begin{example}\label{ex:ore}
		This gives examples from noncommutative division rings and their
		Ore polynomial rings. If \(K\) is a division ring, \(\sigma\) an
		automorphism and \(\delta\) a \(\sigma\)-derivation, the ring
		\(D=K[x;\sigma,\delta]\), with \(xa=\sigma(a)x+\delta(a)\), admits
		the usual degree division algorithms and is a principal ideal domain
		on both sides. The row modules over \(M_t(D)\) therefore satisfy
		Proposition~\ref{prop:morita}. The relevant division theory is treated
		in \cite{Cohn06,MR01}.
	\end{example}
	
	These examples can also be inflated along any surjection
	\(R\to M_t(D)\). The submodules and their homomorphisms do not change
	when the action is viewed through the quotient. Thus the original ring
	need not be prime, hereditary or Noetherian. Upper triangular
	matrix rings mapping onto such a diagonal block give further examples.
	
	\subsection{A non-quasi-projective example with all finite powers}
	
	The next construction explains why the power question should not be
	replaced by the assertion that the square forces quasi-projectivity.
	We first record the commutative tensor argument that will be used.
	
	\begin{lemma}\label{lem:reflexivetransfer}
		Let \(D\) be a commutative coherent UFD and let \(X\) be an
		isosimple right module with \(\End(X)=D\). Suppose \({}_DX\) is flat
		and every submodule of a finite power of \(X\) is generated by \(X\).
		For every \(N\leq X^n\), there is a finitely generated reflexive
		\(D\)-submodule \(P\leq D^n\) such that
		\[
		N=P\otimes_DX,\qquad \Hom(X,N)=P.
		\]
		Consequently \(X^n\) is isoartinian if and only if every finitely
		generated reflexive \(D\)-module of rank at most \(n\) is free.
	\end{lemma}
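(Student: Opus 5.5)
The plan is to compare $N$ with $P:=\Hom_R(X,N)$, viewed inside $\Hom_R(X,X^n)=D^n$. Since $D$ is commutative, $P$ is a $D$-submodule of $D^n$. It is finitely generated because $D$ is Noetherian on principal ideals. More concretely, finite generation follows from the correspondence of Lemma~\ref{lem:basic}, extended coordinatewise, together with the Noetherianity of $X^n$: an ascending chain of finitely generated submodules of $P$ yields an ascending chain of images in $X^n$. These images determine the submodules, as shown below.

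\emph{Step 1: $N$ is determined by $P$.} Consider the evaluation map $\varepsilon\colon P\otimes_D X\to X^n$, $f\otimes x\mapsto f(x)$. Its image is $\Tr(X,N)=N$, by the generation hypothesis. For injectivity, use flatness of ${}_DX$. The inclusion $P\hookrightarrow D^n$ stays injective after $-\otimes_DX$, and $D^n\otimes_DX\cong X^n$ via evaluation. Hence $\varepsilon$ is injective and $N\cong P\otimes_DX$, indeed equal as submodules.

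\emph{Step 2: $P$ is reflexive.} Let $P^{**}\subseteq K\otimes_D P$ (with $K=\operatorname{Frac}D$) be the reflexive hull, taken inside $D^n$. Since $D$ is a UFD, equivalently a Krull domain with height-one primes principal, we have $P^{**}=\bigcap_{\mathfrak p}P_{\mathfrak p}\cap D^n$. For $g\in P^{**}$ there are finitely many coprime nonzero elements $c_1,\dots,c_m$ with $c_ig\in P$. Coherence is used here: $P^{**}/P$ is finitely presented, its annihilator has height $\ge2$, and by the GCD property we may choose $c_i$ generating an ideal not contained in any principal proper ideal. Then $g(c_iX)\subseteq N$ for each $i$. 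I want to conclude $g(X)\subseteq N$, which gives $g\in P$. Here $\sum_i c_iX$ is a submodule of $X$ with $\Hom(X,\sum c_iX)$ containing $c_iD$ for all $i$. By isosimplicity, $\sum c_i X=hX$ for some $h\in D$. Applying Lemma~\ref{lem:basic}, $c_i\in hD$, so $h$ is a common divisor of the $c_i$ and hence a unit. Thus $\sum c_iX=X$, and $g(X)=\sum_i g(c_iX)\subseteq N$. This is the main obstacle: the choice of a \emph{finite} family of pairwise coprime multipliers from a height-$\geq2$ ideal in a non-Noetherian coherent UFD. I would first try taking two elements of the annihilator with gcd $1$, which exist because the annihilator lies in no principal prime.

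\emph{Step 3: the equivalence.} If every f.g.\ reflexive $P\le D^n$ is free of rank $r$, then $N\cong D^r\otimes_DX\cong X^r$, and Proposition~\ref{prop:criterion} gives isoartinianity. Conversely, suppose $X^n$ is isoartinian, and let $Q$ be f.g.\ reflexive of rank $r\le n$. Embed $Q$ into $D^n$ (possible since $Q\subseteq Q^{**}$ and the dual of a f.g.\ module embeds in a free module). Set $N=Q\otimes_DX\le X^n$; this is a submodule by flatness. We need $\Hom(X,N)=Q$. The inclusion $\supseteq$ is clear. For $\subseteq$, apply Step 2's argument with $Q$ reflexive: an $f$ with $f(X)\subseteq N$ lies in $K Q\cap D^n$, and localizing at each height-one prime (a DVR, where flat tensoring is controlled) shows $f\in Q_{\mathfrak p}$. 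Hence $f\in Q^{**}=Q$. By Proposition~\ref{prop:criterion}, $N\cong X^r$, so $Q=\Hom(X,N)\cong\Hom(X,X^r)=D^r$ is free.
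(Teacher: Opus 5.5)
\textbf{The gap: finite generation of \(P=\Hom(X,N)\).} This finite generation is part of the statement, and your Step~2 needs it before coherence can be used. Neither justification you give works.
\begin{itemize}
\item ACC on principal ideals says nothing about submodules of \(D^n\): over the coherent UFD \(k[x_1,x_2,\ldots]\) the ideal \((x_1,x_2,\ldots)\) is not finitely generated.
\item Your chain argument fails because the image \(MX\) does not determine a finitely generated \(M\le P\). By the computation in your own Step~2, \((a,b)X=X\) whenever \(\gcd(a,b)=1\), although \((a,b)D\neq D\) in general. In Theorem~\ref{thm:nagata}, for instance, \((x,y)B=B\) although \((x,y)D\neq D\).
\item If \(D\) is not Noetherian, every nonzero \(P\le D^n\) contains a copy of \(D\). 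It therefore contains strictly ascending chains of finitely generated submodules, so no ACC argument can give finite generation. For Noetherian \(D\) the point is trivial, but the lemma only assumes coherence.
\end{itemize}
Without finite generation, the facts you invoke in Step~2 are unsupported: that \(P^{**}/P\) is finitely presented, that duals commute with localization, and that \(P_{\mathfrak p}=P^{**}_{\mathfrak p}\) at height-one primes. So reflexivity is not established either.

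\textbf{The missing idea} is the recovery computation in the paper's proof, which compares \(P\) with a finitely generated module.
\begin{itemize}
\item By Noetherianity of \(N\), pick \(f_1,\dots,f_q\in P\) with \(\sum_j f_j(X)=N\), and set \(M=\sum_j f_jD\). This \(M\) is finitely presented by coherence.
\item Your coprime-multiplier argument, applied to \(M\) instead of \(P\), gives \(M^{**}\subseteq P\).
\item Conversely, let \(v\in P\). A row \(b\) with \(bM=0\) kills \(N\), hence \(bv=0\), so \(v\) lies in the fraction span of \(M\).
\item A functional \(\lambda\in M^*\) is a row \(b/a\) with \(bM\subseteq aD\). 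Then \(bN\subseteq aX\), so \((bv)X\subseteq aX\), and \(bv\in aD\) by Lemma~\ref{lem:basic}. Thus \(\lambda(v)\in D\) for all \(\lambda\), and \(v\in M^{**}\).
\end{itemize}
Therefore \(P=M^{**}\), which is finitely generated and reflexive, and your Steps~1 and~3 then go through.

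\textbf{Smaller points in Step~3.} The same \(b/a\) computation with \(Q\) in place of \(M\) gives \(\Hom(X,Q\otimes_DX)\subseteq Q^{**}=Q\) directly. Your localization sketch would also need \(X_{\mathfrak p}\neq pX_{\mathfrak p}\); this follows from flatness but is not stated. Finally, embed \(Q\) in \(D^r\subseteq D^n\) by clearing denominators. The embedding through \(Q^*\) lands in a free module of uncontrolled rank.
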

	\begin{proof}
		The module \(N\) is Noetherian, so a finite sum \(X^q\to N\) is
		onto. Let \(M\leq D^n\) be the image of the same matrix over \(D\).
		Flatness identifies \(N=M\otimes_DX\) inside \(X^n\).
		Coherence implies that \(M\), \(M^*\), and \(P=M^{**}\) are
		finitely presented. In the fraction vector space, one has
		\[
		M^{**}=\{v:\lambda(v)\in D\text{ for all }\lambda\in M^*\}.
		\tag{*}
		\label{eq:dualdescription}
		\]
		The coordinate functionals show \(P\subseteq D^n\).
		Formula \eqref{eq:dualdescription} also shows that \(P\) is reflexive:
		every functional on \(M\) extends to \(P\), and restriction identifies
		\(P^*=M^*\).
		
		For an irreducible \(p\), the ring \(D_{(p)}\) is a DVR and
		\(M_{(p)}=P_{(p)}\). Duals commute with this localization because the
		modules are finitely presented. Hence, for \(c\in P/M\), its
		annihilator contains a finite set \(a_1,\ldots,a_l\) of gcd one.
		To see finiteness, start with a nonzero annihilator and, for each of
		its finitely many prime factors, choose an annihilator not divisible
		by that factor.
		
		The submodule \(\sum a_iX\) is \(fX\) for some nonzero \(f\in D\).
		Thus \(f\) divides every \(a_i\), and is a unit. Consequently
		\(\sum a_iX=X\), and \(c\otimes X=0\).
		It follows that \(M\otimes_DX=P\otimes_DX\) inside \(X^n\).
		
		To recover \(P\), let \(v\in D^n=\Hom(X,X^n)\) have image in
		\(P\otimes_DX\). It lies in the fraction span of \(P\): any
		fraction-linear equation annihilating \(P\), after clearing
		denominators, also annihilates \(v\).
		For \(\lambda\in P^*\), extend it to a fraction-linear row \(b/a\),
		where \(a\neq0\) and \(b\in D^{1\times n}\).
		Since \(bP\subseteq aD\), one has
		\(b(P\otimes_DX)\subseteq aX\). Hence
		\((bv)X\subseteq aX\), and \(bv\in aD\).
		Formula \eqref{eq:dualdescription} and reflexivity give \(v\in P\).
		Thus \(\Hom(X,N)=P\).
		
		Every finitely generated reflexive module of rank \(r\) embeds in
		\(D^r\) after clearing denominators. Applying the preceding recovery
		formula proves both directions of the final assertion.
	\end{proof}
	
	The coherent module facts used here are standard; see
	\cite{Bourbaki,Stacks}. Over a two-dimensional regular local ring,
	finitely generated reflexive modules are free, by the depth criterion and the
	Auslander--Buchsbaum formula \cite{Matsumura,Stacks}.
	
	\begin{theorem}\label{thm:nagata}
		There is a left Noetherian and right Noetherian domain \(R\), and a
		cyclic isosimple left \(R\)-module \(S\), such that every finite power
		of \(S\) is isoartinian but \(S\) is not quasi-projective.
		Its endomorphism ring is \(k[x,y]_{(x,y)}\), where \(k\) can be any
		field of characteristic zero.
	\end{theorem}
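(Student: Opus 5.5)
The plan is to realize \(S\) as a bimodule \({}_RS_D\), with \(D=k[x,y]_{(x,y)}\) acting on the right and commuting with \(R\). The conclusion about finite powers should then come from Lemma~\ref{lem:reflexivetransfer} (with left and right interchanged): over a two-dimensional regular local ring every finitely generated reflexive module is free. So the work splits into three parts: a construction, a verification of the hypotheses of Lemma~\ref{lem:reflexivetransfer}, and a separate argument that \(S\) is not quasi-projective. I have not pinned down the construction, so part of what follows is tentative.

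\emph{Construction.} I would use characteristic zero through derivations. Take a \(k\)-derivation \(\delta\) of a suitable commutative Noetherian domain \(C\), and let \(R\) be an Ore extension \(C[\theta;\delta]\), or a tensor product of such an extension with \(D\) over \(k\). This \(R\) is a left and right Noetherian domain by the Hilbert basis theorem for Ore extensions. Let \(S\) be a cyclic module of the form \(R/R\mathfrak q\), equal as a \(D\)-module to \(D\otimes_k V\) for some \(V\). The derivation should be chosen so that the \(R\)-submodules of \(S\) are exactly the sets \(S f\) with \(0\neq f\in D\). The intended mechanism is the classical one, in the style of Nagata and Seidenberg: for a simple derivation in characteristic zero, a \(\delta\)-stable ideal is either \(0\) or everything. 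This is what should kill the nonprincipal ideals such as \(\mathfrak m S\), which block the naive tensor construction. With such a choice, isosimplicity follows because right multiplication by \(f\) is injective. Then \(\End_R(S)=D\) follows by the argument used in Theorem~\ref{thm:counter}: an endomorphism is determined by the image of the cyclic generator, and the commutation relations with \(\theta\) force it to be right multiplication by an element of \(D\).

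\emph{Hypotheses of Lemma~\ref{lem:reflexivetransfer}.} The ring \(D\) is a regular local ring, hence a coherent UFD. The module \(S\) is free, and so flat, over \(D\) by construction. The real content is the generation property: every submodule \(N\leq S^n\) is the sum of images of maps \(S\to S^n\), that is, \(N=\sum_j S\,v_j\) with \(v_j\in D^n\). I would prove this by the same filtration device as in Theorem~\ref{thm:qp}. Project \(N\) to the last \(n-1\) coordinates, and use the description of the submodules of \(S\) to see that each fibre is \(S f\). Then lift generators coordinatewise, using that the only obstruction is \(\delta\)-stable and therefore trivial. Once these hypotheses hold, Lemma~\ref{lem:reflexivetransfer} shows that every \(N\leq S^n\) is \(P\otimes_D S\) with \(P\) finitely generated and reflexive. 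Since \(D\) is regular local of dimension \(2\), such a \(P\) is free, so Proposition~\ref{prop:criterion} gives that \(S^n\) is isoartinian.

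\emph{Failure of quasi-projectivity.} Take \(U=S\mathfrak m^{j}\), or more simply \(U=Sx\). Every lift \(S\to S\) is right multiplication by an element of \(D\). So it suffices to exhibit an \(R\)-map \(S\to S/Sx\) that is not induced by an element of \(D/xD\). I would find this map by using the extra symmetry of \(S/Sx\cong D/xD\otimes V\), where the derivation becomes nonsimple or acquires new constants. The main obstacle is the construction itself. A single derivation must make all nonprincipal submodules disappear, which is needed for isosimplicity and generation. At the same time it must leave room on a quotient for a nonliftable map, which is needed to break quasi-projectivity. I would first test a Nagata-type simple derivation on \(k[x,y]\), localized at \((x,y)\), and check both properties by hand.
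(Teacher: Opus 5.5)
Your outer strategy matches the paper's. You use Lemma~\ref{lem:reflexivetransfer} with sides reversed, freeness of reflexive modules over the two-dimensional regular local ring \(D\), and a non-liftable map \(S\to S/Sx\). But the construction has a genuine gap, and the shape you propose cannot work. Since \(D\) acts by \(R\)-endomorphisms, \(S\mathfrak m=Sx+Sy\) is an \(R\)-submodule whatever derivation you choose, so no derivation can ``kill'' it. Suppose \(S\) is isosimple with \(\End_R(S)=D\). Then \(S\mathfrak m=Sf\) for some \(0\neq f\in D\). Lemma~\ref{lem:basic} gives \(f\mid x\) and \(f\mid y\), so \(f\) is a unit and \(S\mathfrak m=S\), that is, \(S\otimes_D D/\mathfrak m=0\). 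This is impossible for a nonzero free module \(D\otimes_kV\) over the local ring \(D\). So \(S\) must be flat but \emph{not} faithfully flat over \(D\). A Nagata--Seidenberg simple derivation does not help: on its own it makes the module simple with a field of constants, and after tensoring with \(D\) the ideal \(\mathfrak m\otimes_kC\) is still a proper \(\delta\)-stable ideal. The missing idea is the paper's gcd-primitive localization \(B=D[t]_\Sigma\). It inverts every polynomial with coprime coefficients, such as \(x+yt\). Hence \((a,b)B=\gcd(a,b)B\), and in particular \((x,y)B=B\). So \(B\) is a PID whose ideals are the \(cB\) with \(c\in D\), all stable under \(\delta=d/dt\). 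Then one takes \(R=B[z;\delta]\) and \(S=B\), and gets \(\End_R(S)=B\cap\operatorname{Frac}(D)=D\).

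Your other two steps also lack a working mechanism. For generation, the filtration device of Theorem~\ref{thm:qp} needs lifts of maps \(V\to S/U\). That is exactly the quasi-projectivity that must fail here, so ``the obstruction is \(\delta\)-stable and therefore trivial'' cannot be right. The paper uses characteristic zero directly. After clearing denominators by an element of \(\Sigma\), take a polynomial vector \(p\in N\) of degree \(r\). Then \(\delta^r(p)/r!\) is its leading coefficient vector, which lies in \(N\cap D^n\); subtracting and iterating gives \(N=(N\cap D^n)B\). For non-quasi-projectivity you give no map. The paper takes multiplication by \(\overline u\), with \(u=t/(x+yt)\), as a map \(S\to S/xS\). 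It is an \(R\)-map because \(\delta(u)=x/(x+yt)^2\in xB\). A lift would be some \(c\in D\) with \(c-u\in xB\). Since \(yu\equiv1\pmod{xB}\), this gives \(yc-1\in xB\cap D=xD\), which is false modulo \((x,y)\). Both arguments depend on the localization at \(\Sigma\).
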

	\begin{proof}
		Put \(D=k[x,y]_{(x,y)}\), let \(t\) be another indeterminate, and set
		\[
		\Sigma=\{f\in D[t]\setminus0:\gcd(\text{coefficients of }f)=1\},
		\qquad B=D[t]_\Sigma.
		\]
		This is the gcd-primitive localization, not the localization requiring
		the coefficient ideal to be the unit ideal.
		Gauss's lemma makes \(\Sigma\) multiplicative. Every nonzero
		polynomial is a constant \(c\in D\) times a member of \(\Sigma\);
		thus every nonzero element of \(B\) is associated to a constant.
		Also \((a,b)B=\gcd(a,b)B\), since after division by the gcd the
		polynomial \(a+bt\) belongs to \(\Sigma\). Every nonzero ideal is
		principal: start with one nonzero constant in it, take minima of the
		valuations of its finitely many prime factors among all elements of
		the ideal, and realize these minima using a finite set. The Bezout
		calculation gives a generator for the whole ideal.
		Thus \(B\) is a PID and its ideals are \(cB\), \(c\in D\).
		Moreover \(B\cap\operatorname{Frac}(D)=D\).
		
		Let \(\delta=d/dt\), let \(R=B[z;\delta]\), and let \(S=B\), with
		the usual left \(B\)-action and \(z\) acting as \(\delta\).
		All \(B\)-ideals \(cB\) are stable under \(\delta\).
		Multiplication by \(c\) is an \(R\)-isomorphism \(S\to cB\), so
		\(S\) is isosimple. An \(R\)-endomorphism is multiplication by
		an element annihilated by \(\delta\). In characteristic zero,
		the constants of \(\operatorname{Frac}(D)(t)\) are
		\(\operatorname{Frac}(D)\), whence \(\End_R(S)=D\).
		The ring \(R\) is a two-sided Noetherian domain by the Ore-extension
		Hilbert basis theorem \cite{MR01}.
		
		The module \(B\) is flat over \(D\). If \(N\leq S^n\), choose
		finite \(B\)-generators of \(N\). Multiply each by a common
		denominator in \(\Sigma\) to obtain polynomial vectors in \(N\).
		For a polynomial vector of degree \(r\), its \(r\)-th derivative
		divided by \(r!\) is its leading coefficient and lies in \(N\).
		Subtract the corresponding leading term and repeat. Thus all its
		coefficient vectors lie in \(N\cap D^n\), and
		\(N=(N\cap D^n)B\). Each constant vector defines an \(R\)-map
		\(S\to N\). Lemma~\ref{lem:reflexivetransfer}, with left and right
		reversed, now applies. Since \(D\) is regular local of dimension two,
		all finite powers are isoartinian.
		
		It remains to check non-quasi-projectivity. Put
		\[
		u=\frac{t}{x+yt}\in B.
		\]
		The denominator belongs to \(\Sigma\). One has
		\(\delta(u)=x/(x+yt)^2\), so multiplication by \(\overline u\)
		defines an \(R\)-map \(S\to S/xS\). If it lifted to an endomorphism
		of \(S\), there would be \(c\in D\) with \(c-u\in xB\).
		But \(yu\equiv1\pmod{xB}\), giving
		\[
		yc-1\in xB\cap D=xD.
		\]
		Reducing modulo \((x,y)\) is a contradiction.
		The contraction equality follows directly from gcd-primitivity.
		This proves the theorem. Passing to \(R^{\op}\) gives a right-module
		example.
	\end{proof}
	
	\section{From the square to all finite powers}\label{sec:powers}
	
	In this section, we consider considers the following question: Let  \(S\) be isosimple such that
		$S^2$ is isoartinian. When  
		$S^n$ is isoartinian for every positive integer $n?$

	\begin{theorem}\label{thm:power-main}
	Let \(S\) be an isosimple $R$-module, and set \(E=\End_R(S)\).	Suppose that either
		\begin{enumerate}[label=\textup{(\roman*)}]
			\item \(E\) is commutative and Noetherian; or
			\item \(E\) is right semihereditary.
		\end{enumerate}
		Then every submodule of \(S^n\) is isomorphic to \(S^r\) for some
		\(0\leq r\leq n\), for every positive integer \(n\). In particular,
		all finite powers of \(S\) are isoartinian.
		
		In case \textup{(i)}, \(E\) is a regular UFD of Krull dimension at most
		two, and all its finitely generated projective modules are free.
		In case \textup{(ii)}, \(E\) is a principal right ideal domain and
		\(S\) is quasi-projective.
	\end{theorem}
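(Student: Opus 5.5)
The plan is to assume throughout, as in the setup of Section~\ref{sec:powers}, that \(S^2\) is isoartinian; without this the extra conclusions about \(E\) cannot hold. The first step converts that hypothesis into two structural facts.

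\textbf{Step 1: consequences of the square hypothesis.}
\begin{enumerate}[label=\textup{(\alph*)}]
\item \({}_ES\) is flat.
\item Every \(N\leq S^n\) is generated by images of maps \(S\to N\), so that \(N=M\otimes_ES\) with \(M=\Hom_R(S,N)\leq E^n\) finitely generated.
\end{enumerate}
For (a) I would use the equational criterion for flatness. Given a relation \(f_1s_1+\cdots+f_ms_m=0\), consider the kernel of \(S^2\to S\), \((s,s')\mapsto fs+gs'\). It is a submodule of \(S^2\) of uniform dimension at most \(1\), so by Proposition~\ref{prop:criterion} it is zero or a copy of \(S\). In the latter case it is the image of a single map \(s\mapsto(as,bs)\). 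This gives the two-term equational criterion. Longer relations would then be handled by induction, grouping terms as \(f_1S+\cdots+f_{m-1}S=hS\) using Lemma~\ref{lem:basic}.

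For (b) I would induct on \(n\) as in Theorem~\ref{thm:qp}. Set \(U=N\cap(S\oplus0)\) and let \(V\) be the projection of \(N\) to \(S^{n-1}\). Then show that \(S\)-generation of the pieces \(U\) and \(V\), together with the square condition applied to two-coordinate projections, forces \(\Tr(S,N)=N\). I expect (b) to be the main obstacle. Theorem~\ref{thm:counter} shows exactly how generation fails in general, so the square hypothesis must be used in an essential way. The first thing I would try is to show that a non-generated \(N\) yields, after projecting and intersecting, a non-generated essential submodule of \(S^2\), contradicting Proposition~\ref{prop:criterion}.

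\textbf{Step 2: case (i).} Here \(E\) is a UFD by Corollary~\ref{cor:UFD}, and it is Noetherian, hence coherent. So Lemma~\ref{lem:reflexivetransfer} applies, and \(S^2\) isoartinian means that every finitely generated reflexive \(E\)-module of rank \(\leq2\) is free.
\begin{itemize}
\item \emph{Dimension at most two.} If some localization had a regular sequence \(a,b,c\), then \(\ker(E^3\to E)\), \((u,v,w)\mapsto au+bv+cw\), would be a second syzygy, hence reflexive of rank \(2\). It is not free, since \(E/(a,b,c)\) has projective dimension \(3\) locally. Since normal Noetherian rings of dimension \(\leq2\) are Cohen--Macaulay, this bounds the Krull dimension by two.
\item \emph{Regularity.} Next I would show that each local ring \(E_\mathfrak m\) of dimension two is regular. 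In such a ring reflexive modules are maximal Cohen--Macaulay, so the goal is to produce a rank-two reflexive module that is not free when \(E_\mathfrak m\) is singular, for instance from a syzygy of \(\mathfrak m\).
\item \emph{Projectives are free.} Let \(P\) be finitely generated projective of rank \(r\). By Serre's splitting theorem in dimension \(\leq2\), \(P\cong E^{r-2}\oplus Q\) with \(Q\) of rank \(\leq2\). Now \(Q\) is reflexive, hence free by the hypothesis; in rank one this is also \(\operatorname{Pic}E=\operatorname{Cl}E=0\).
\item \emph{All powers.} Finally, a finitely generated reflexive module over a regular ring of dimension \(\leq2\) is locally free by Auslander--Buchsbaum, hence projective, hence free. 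The final assertion of Lemma~\ref{lem:reflexivetransfer} then gives that every \(S^n\) is isoartinian.
\end{itemize}

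\textbf{Step 3: case (ii).} Every finitely generated right ideal \(I=f_1E+\cdots+f_mE\) of the domain \(E\) is projective. Using Step~1(b) with \(N=f_1S+\cdots+f_mS\leq S\), we have \(N=hS\) for some \(h\). The factorization correspondence of Lemma~\ref{lem:basic} together with flatness should give \(I=hE\); for \(m=2\) this uses that \(\Hom(S,hS)=hE\). By the right Ore condition, every right ideal is a directed union of principal ones. The ascending chain condition on principal right ideals, coming from \(S\) Noetherian via Lemma~\ref{lem:basic}, then makes \(E\) a principal right ideal domain.

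Quasi-projectivity would follow by lifting a map \(S\to S/U\). Since \(U=uS\), write this map as induced by an element \(w\) of the pullback of \(S\to S/U\) along itself. That pullback is a submodule of \(S^2\), so by Step~1(b) it is generated by maps \(S\to S^2\). The projection onto the first factor is surjective, and since \(E\) is right principal these generating maps can be combined into a single section. Theorem~\ref{thm:qp} then yields the conclusion for all \(S^n\).
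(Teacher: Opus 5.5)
Step~1(b) is only a plan, yet case (i) rests on it entirely. Lemma~\ref{lem:reflexivetransfer} needs every submodule of every $S^n$ to be generated by $S$, and it needs ${}_ES$ flat.

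Your grouping induction for (a) stalls beyond two terms. Three-term relations can be handled directly, since their kernel embeds in $S^2$, but longer ones cannot. After grouping you have $h\psi(s)+f_ms_m=0$ with $\psi=(g_1,\dots,g_{m-1})\colon S^{m-1}\to S$ surjective, and the two-term case gives $\psi(s)=at$, $s_m=bt$. To get back to the $s_i$ you must lift $at$ along $\psi$ by a column over $E$, i.e.\ you need $a\in\sum_ig_iE$. Surjectivity of $\psi$ does not give this. In the example of Theorem~\ref{thm:nagata}, where $E=k[x,y]_{(x,y)}$ and all powers of $S$ are isoartinian, one has $xS+yS=S$ but $xE+yE\neq E$.

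The missing idea is a pullback that reduces generation in $S^n$ to the square. Take $U,V$ as you defined them and maps $\alpha_j\colon S\to V$ generating $V$ (by induction), and put $P_j=\{(x,t)\in S^2:(x,\alpha_j(t))\in N\}$. By Proposition~\ref{prop:criterion} each $P_j$ is $0$, $S$ or $S^2$, hence generated by $S$. The images of the maps $P_j\to N$, $(x,t)\mapsto(x,\alpha_j(t))$, contain $U$ and project onto $\alpha_j(S)$, so together they generate $N$. Applied to kernels of rows $S^m\to S$, this gives flatness by the equational criterion.

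Two further steps fail as written. In case (i), a regular sequence of length three only shows $\operatorname{depth}E_{\mathfrak m}\le2$. Nothing in your argument excludes $\operatorname{depth}E_{\mathfrak m}=2<\dim E_{\mathfrak m}$, and the fact that normal rings of dimension at most two are Cohen--Macaulay presupposes the bound you want. Regularity is only announced. Both follow at once if you take any $a,b,c\in\mathfrak m$ with independent images in $\mathfrak n/\mathfrak n^2$, where $\mathfrak n=\mathfrak mE_{\mathfrak m}$. The kernel of $(a,b,c)$ on $E^3$ is free of rank two: it is reflexive of rank two, hence free by what you extracted from Lemma~\ref{lem:reflexivetransfer}, or use Lemma~\ref{lem:three-row}. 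But its localization $L$ lies in $\mathfrak nE_{\mathfrak m}^3$, and the three Koszul relations remain independent in $L/\mathfrak nL$. So the embedding dimension is at most two, as in Proposition~\ref{prop:commregular}.

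In case (ii), Lemma~\ref{lem:basic} and flatness cannot give $I=hE$: the same example has $I=xE+yE$ with $IS=S$ and $I\neq E$. You state that $I$ is projective but never use it, and that is exactly what is required. For finitely generated projective $I$ the natural map $\eta_I\colon I\to\Hom_R(S,I\otimes_ES)\cong\Hom_R(S,hS)=hE$ is an isomorphism, and it is the inclusion, so $I=hE$. In fact case (ii) can be run without Step~1(b): two-generated right ideals suffice for the Bezout property, and two-term relations and submodules of $S^2$ are controlled directly.

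With these repairs the rest of your plan works. In case (i), Auslander--Buchsbaum and Lemma~\ref{lem:reflexivetransfer} take the place of the paper's argument that $M^{**}$ is projective (from $\gl E\le2$) and of Lemma~\ref{lem:codimension-two}. In case (ii), your section of $P_v\to S$ assembled from generators, followed by Theorem~\ref{thm:qp}, replaces the paper's splitting argument and its use of $N\cong M\otimes_ES$ with $M$ free.
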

	
	The proof is given in the next four subsections. The first gives the
	generation and tensor properties forced by the square hypothesis.
	The commutative proof then uses a three-term relation module to bound
	local embedding dimensions. The right semihereditary proof uses
	splitting of finitely generated right ideals. In particular, the
	second assertion requires no preliminary Noetherian assumption on \(E\).
	
	\subsection{Generation and tensor products}
	
	We say that a module is generated by \(S\) if it is a sum of images of
	homomorphisms from \(S\). The generator theory behind this terminology
	is developed in \cite{Wis91,Wis96}. The assertions we need follow
	directly from the square hypothesis.
	
	\begin{proposition}\label{prop:flat}
		Every submodule of a finite power of \(S\) is generated by finitely
		many images of \(S\), and the left \(E\)-module \({}_ES\) is flat.
		For every \(N\leq S^n\), there is a finitely generated right
		\(E\)-submodule \(M\leq E^n\) for which tensoring the inclusion
		identifies
		\[
		N=M\otimes_ES\subseteq E^n\otimes_ES=S^n.
		\]
	\end{proposition}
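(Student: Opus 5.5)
The standing hypothesis of this section is that \(S^2\) is isoartinian. By Proposition~\ref{prop:criterion}, every submodule of \(S^2\) is then isomorphic to \(0\), \(S\) or \(S^2\). The plan has three steps: prove generation, deduce flatness of \({}_ES\), and then obtain the tensor identification from flatness. The order matters, because the identification \(N=M\otimes_ES\) only makes sense inside \(S^n\) once flatness is known.

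\emph{Generation.} Let \(N\leq S^n\) be nonzero. I will induct on \(n\), using the coordinate projection onto the last factor. Put \(U=N\cap(S^{n-1}\oplus0)\); by induction it is generated by finitely many images of \(S\). The image \(V\) of \(N\) in the last coordinate is zero or isomorphic to \(S\), say \(V=gS\). Choose \(s_0\in S\) with \(S=s_0R\) (Lemma~\ref{lem:basic}), and pick \(x\in N\) whose last coordinate is \(g(s_0)\). Then \(N=U+xR\). The key step is to show that \(xR\) is contained in a sum of images of maps \(S\to N\). For this, consider the submodule of \(S^2\) spanned by a suitable coordinate of \(x\) together with its last coordinate, or the projection of \(N\) onto two coordinates. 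Such a submodule is isomorphic to \(S\) or \(S^2\), hence generated by \(S\). The difficulty is lifting this two-coordinate generation back into \(N\), and I expect this to be the main obstacle.

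If the lifting fails, I will first try a different route. Use the \(S^2\) hypothesis to show that every \(E\)-relation \(\sum_i e_i s_i=0\), with \(e_i\in E\) and \(s_i\in S\), is a consequence of relations of length two: the relation submodules \(\{(a,b):fa=gb\}\) of \(S^2\) are isomorphic to \(S\) or \(S^2\), hence generated by \(S\). Then proceed by induction on the length of relations, which is essentially the equational criterion for flatness. With that in hand, generation of \(N\leq S^n\) follows by induction via the pullback description of \(N\) over \(U\) and \(V\). Since \(N\) is Noetherian, finitely many images suffice.

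\emph{Flatness.} By the equational criterion, it suffices to show that for \(\varphi=(e_1,\dots,e_m)\colon S^m\to S\), every element \((s_i)\) of \(K=\ker\varphi\) lies in the image of some map \(S^q\to K\) that factors through an \(E\)-relation matrix. Generation of \(K\leq S^m\) by images of \(S\) gives exactly this. Each map \(S\to K\) is a column \(c\in E^m\) with \(\sum e_ic_i=0\), so \((s_i)=\sum_j c_j t_j\) with \(t_j\in S\). Hence \({}_ES\) is flat.

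\emph{Tensor identification.} For \(N\leq S^n\), take finitely many maps \(S\to N\) whose images sum to \(N\). These form an \(n\times q\) matrix over \(E\), and I let \(M\leq E^n\) be its column span, a finitely generated right \(E\)-module. Since \({}_ES\) is flat, \(M\otimes_ES\to E^n\otimes_ES=S^n\) is injective. Its image is \(\sum_j c_jS=N\), which gives \(N=M\otimes_ES\).
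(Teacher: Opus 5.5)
Your flatness and tensor-identification steps are the paper's, but both rest on the generation statement, and generation is not proved. In your main route you split \(N\le S^{n-1}\oplus S\) into \(U=N\cap(S^{n-1}\oplus0)\) and the last-coordinate image \(V=gS\cong S\). With this orientation, pulling \(N\) back along \(g\) just returns a copy of \(N\) inside \(S^n\). Projecting onto two coordinates discards exactly the information needed to lift maps back into \(N\), so the obstacle you flag is never removed.

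The fallback does not repair this. You assert that every \(E\)-relation follows from two-term relations ``by induction on the length'', but you give no inductive step, and the natural ones fail.
\begin{enumerate}[label=\textup{(\alph*)}]
\item Merging two entries via \(e_{m-1}S+e_mS=gS\), \(e_{m-1}=gc\), \(e_m=gd\), shortens the relation. Lifting the solution back, however, requires the epimorphism \((c,d)\colon S^2\to S\) to split \(E\)-linearly. This is false in general: in Theorem~\ref{thm:nagata} one has \(xS+yS=S\) while \(xE+yE\neq E\).
\item Peeling off an entry \(e_1\neq0\) instead reduces the problem to generating \(\{s'\in S^{m-1}:\sum_{i\ge2}e_is_i\in e_1S\}\), which is not a relation module. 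Equivalently, passing from a finitely generated right ideal \(I\) to \(I+eE\) needs \((I\cap eE)S=IS\cap eS\), which is a statement about a longer relation module.
\end{enumerate}
So an induction confined to relation modules does not close. Your closing appeal to ``the pullback description over \(U\) and \(V\)'' is also too vague, in your orientation, to supply the missing reduction to \(S^2\).

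The missing idea is to induct on generation of \emph{all} submodules, with the roles of \(U\) and \(V\) exchanged. Put \(U=N\cap(S\oplus0)\) and let \(V\le S^{n-1}\) be the projection of \(N\) onto the last \(n-1\) coordinates. By induction, \(V=\sum_j\alpha_j(S)\) for finitely many \(\alpha_j\colon S\to V\). The fibre products \(P_j=\{(x,t)\in S^2:(x,\alpha_j(t))\in N\}\) are submodules of \(S^2\). Hence they are zero or isomorphic to \(S\) or \(S^2\), and so are generated by \(S\).

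The maps \(P_j\to N\), \((x,t)\mapsto(x,\alpha_j(t))\), have images containing \(U\). If \(w\in N\) projects to \(\sum_j\alpha_j(t_j)\), choose \((x_j,\alpha_j(t_j))\in N\). Then \((x_j,t_j)\in P_j\) and \(w-\sum_j(x_j,\alpha_j(t_j))\in U\). Thus \(N\) is generated by \(S\) with no flatness needed, and your flatness and tensor arguments then go through verbatim.

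For \(N=\ker(e_1,\dots,e_m)\), the \(P_j\) are exactly the two-term relation modules \(\ker\bigl(e_1,(e_2,\dots,e_m)\alpha_j\bigr)\). But the \(\alpha_j\) come from generating the non-relation module \(V\), which is why the induction must run over arbitrary submodules.
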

	\begin{proof}
		We first prove generation by induction on \(n\). The assertion for
		\(n=1\) is the definition of isosimplicity. For
		\(N\leq S\oplus S^{n-1}\), put
		\[
		U=N\cap(S\oplus0),\qquad V=\pi(N),
		\]
		where \(\pi\) projects onto the last \(n-1\) coordinates. If \(V=0\),
		then \(N=U\) is zero or isomorphic to \(S\). Otherwise, choose
		finitely many maps \(\alpha_j:S\to V\) whose images generate \(V\),
		using the induction hypothesis. Define
		\[
		P_j=\{(x,t)\in S^2:(x,\alpha_j(t))\in N\}.
		\]
		By Proposition~\ref{prop:criterion}, each \(P_j\) is zero or
		isomorphic to \(S\) or \(S^2\). The maps
		\[
		P_j\longrightarrow N,\qquad (x,t)\longmapsto(x,\alpha_j(t))
		\]
		have images containing \(U\), and their projections generate \(V\).
		Their images therefore generate \(N\). This proves the induction.
		
		To verify flatness, consider a finite relation
		\[
		\sum_{i=1}^m a_is_i=0,\qquad a_i\in E,\quad s_i\in S.
		\]
		The vector \((s_1,\ldots,s_m)\) belongs to the kernel of the
		\(R\)-map \(S^m\to S\) given by the row \((a_1,\ldots,a_m)\).
		That kernel is generated by finitely many maps \(S\to S^m\).
		Writing their columns as \((b_{ij})_i\), we obtain elements
		\(t_j\in S\) such that
		\[
		s_i=\sum_j b_{ij}t_j,\qquad
		\sum_i a_i b_{ij}=0\quad\text{for every }j.
		\]
		The finite-relation criterion for flatness now proves that
		\({}_ES\) is flat; see \cite{Bourbaki,Rotman09}.
		
		Finally, choose finitely many maps \(S\to N\subseteq S^n\) whose
		images generate \(N\), and let \(M\leq E^n\) be generated by their
		columns. Flatness makes \(M\otimes_ES\to S^n\) injective. Its image
		is precisely \(N\), as required.
	\end{proof}
	
	We shall use the natural map
	\begin{equation}\label{eq:projective-recovery}
		\eta_P:P\longrightarrow\Hom_R(S,P\otimes_ES),\qquad
		\eta_P(p)(s)=p\otimes s.
	\end{equation}
	It is an isomorphism whenever \(P\) is finitely generated projective:
	this is immediate for a finite free module, and the assertion passes
	to idempotent summands. This observation does not require
	\(S\) to be projective over \(R\).
	
	\begin{lemma}\label{lem:smallprojectives}
		Every finitely generated projective right \(E\)-module of rank at
		most two is free.
	\end{lemma}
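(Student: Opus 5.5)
The plan is to pass from projective $E$-modules to submodules of $S^2$ through the functor $-\otimes_ES$, and to use the square hypothesis of this section, that $S^2$ is isoartinian. Let $P$ be a finitely generated projective right $E$-module of rank $r\le 2$. Here rank means the dimension of $P\otimes_E Q$ over the division ring of right fractions $Q$ of the right Ore domain $E$ (Lemma~\ref{lem:basic}). The case $r=0$ is trivial, because a projective module is torsion-free, so $P\otimes_EQ=0$ forces $P=0$.

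For $r\ge1$, the first step is to realize $P$ as a submodule of $E^r$. Since $P$ is a direct summand of some $E^m$, it embeds in $P\otimes_EQ\cong Q^r$. The image is finitely generated, so after clearing denominators on the right (a common right denominator exists by the Ore condition) we get an embedding $P\hookrightarrow E^r$. Flatness of ${}_ES$ (Proposition~\ref{prop:flat}) then gives an injection
\[
N:=P\otimes_ES\hookrightarrow E^r\otimes_ES=S^r\subseteq S^2.
\]
By Proposition~\ref{prop:criterion}, $N\cong S^{r'}$ for some $r'\le 2$.

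The second step recovers $P$ from $N$. By the observation after \eqref{eq:projective-recovery}, $\eta_P$ is an isomorphism, so
\[
P\cong\Hom_R(S,N)\cong\Hom_R(S,S^{r'})\cong E^{r'}.
\]
Hence $P$ is free, and comparing ranks gives $r'=r$. Rank is well defined because $Q$ is a division ring, so $Q^{r'}\cong Q^r$ forces $r'=r$.

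The main obstacle is the embedding step: making sure that a finitely generated projective module of rank $r$ actually sits inside $E^r$, and not merely inside some larger free module. The tool is the right Ore condition. The canonical map $P\to P\otimes_EQ$ is injective because $P$ is a summand of a free module and $E\to Q$ is injective with $Q$ flat. Clearing finitely many denominators then uses only the existence of common right multiples. Once the embedding is in place, the hypothesis $r\le2$ is exactly what lets us apply Proposition~\ref{prop:criterion} to $S^2$, and everything else is formal.
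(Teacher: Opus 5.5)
Everything after the embedding step is exactly the paper's argument and is fine: flatness of \({}_ES\), Proposition~\ref{prop:criterion} inside \(S^2\), recovery via \(\eta_P\), and the rank comparison. The gap is in the step you flagged yourself, namely producing a right \(E\)-linear monomorphism \(P\hookrightarrow E^r\).

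Lemma~\ref{lem:basic} only makes \(E\) right Ore. Common right multiples let you write finitely many generators of \(P\subseteq Q^r\) as \(w_js^{-1}\) with \(w_j\in E^r\). But \(v\mapsto vs\) is not right \(E\)-linear, so this does not give a module embedding. The right \(E\)-linear maps \(Q^r\to Q^r\) are left multiplications by matrices over \(Q\). Clearing denominators from the left needs common left denominators, i.e.\ the left Ore condition, which \(E\) need not satisfy. For example, take \(E\) a principal right ideal domain that is not left Ore and \(S=E_E\); this satisfies the square hypothesis by Theorem~\ref{thm:qp}.

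For such \(E\) the principle you invoke is actually false. Suppose \(Ea\cap Es=0\) with \(a,s\neq0\), and let \(P'=a^{-1}E+s^{-1}E\subseteq Q\). Then \(P'\) is finitely generated of rank one. Any right \(E\)-map \(P'\to E\) extends to \(P'\otimes_EQ\cong Q\), so it is left multiplication by some \(c\in Q\) with \(ca^{-1},cs^{-1}\in E\). This forces \(c\in Ea\cap Es=0\), so \(P'\) does not embed in \(E\). Being a finitely generated submodule of \(Q^r\) is therefore not enough; the argument must use that \(P\) lies in a free module.

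The repair is the paper's argument. Regard \(P\) as a direct summand of \(E^m\), so that \(P\otimes_EQ\) is an \(r\)-dimensional subspace of \(Q^m\). The coordinate functionals have no common zero on this subspace. Hence some \(r\) of them restrict to a basis of its dual and are jointly injective on it. The corresponding coordinate projection \(E^m\to E^r\), restricted to \(P\), is right \(E\)-linear. It is injective because \(P\to P\otimes_EQ\) is injective. With this embedding in place, the rest of your proof goes through verbatim.
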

	\begin{proof}
		Let \(Q\) be the right quotient division ring of the right Ore domain
		\(E\), and put \(r=\dim_Q(P\otimes_EQ)\leq2\).
		View \(P\) as a direct summand of \(E^d\). In the corresponding
		\(r\)-dimensional subspace of \(Q^d\), choose \(r\) coordinate
		projections whose combined restriction is injective. Restricting to
		\(P\) gives a monomorphism \(P\to E^r\); injectivity follows because
		\(P\to P\otimes_EQ\) is injective. If \(r=0\), this already gives
		\(P=0\).
		
		Flatness from Proposition~\ref{prop:flat} gives
		\(P\otimes_ES\hookrightarrow S^r\). Since \(r\leq2\),
		Proposition~\ref{prop:criterion} yields
		\(P\otimes_ES\cong S^t\) for some \(t\leq r\).
		Applying \eqref{eq:projective-recovery}, we obtain
		\[
		P\cong\Hom_R(S,S^t)\cong E^t.
		\]
		Comparison of ranks over \(Q\) gives \(t=r\).
	\end{proof}
	
	\subsection{Three generators and local embedding dimension}
	
	The square hypothesis also determines the kernel of a row with three
	entries over \(E\). The commutative proof depends on this low-rank
	consequence, rather than on a bound for the stable rank of \(E\).
	
	\begin{lemma}\label{lem:three-row}
		For \(a,b,c\in E\), not all zero,
		\[
		\ker\bigl((a,b,c):E^3\longrightarrow E\bigr)\cong E^2.
		\]
	\end{lemma}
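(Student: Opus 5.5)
The plan is to transport the problem to $S$-modules via the tensor functor. Set $K=\ker\bigl((a,b,c):E^3\to E\bigr)$. The row $(a,b,c)$, viewed as left multiplication on columns, is an $R$-map $\phi:S^3\to S$. Flatness of ${}_ES$ from Proposition~\ref{prop:flat} makes $-\otimes_ES$ exact. Applying it to $0\to K\to E^3\to E$ gives
\[
K\otimes_ES\cong\ker\phi\leq S^3 .
\]
Since $(a,b,c)\neq0$, the image $\phi(S^3)$ is a nonzero submodule of $S$. Because $S$ is uniform, $\ker\phi$ has uniform dimension $2$, so it is isomorphic to an essential submodule of $S^2$ by Lemma~\ref{lem:projection}. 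The square hypothesis then gives $\ker\phi\cong S^2$ by Proposition~\ref{prop:criterion}. It remains to pull this back to $K$ itself.

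To recover $K$, apply $\Hom_R(S,-)$. Since $\Hom_R(S,S^m)=E^m$ as right $E$-modules, left exactness of $\Hom_R(S,-)$ gives
\[
\Hom_R(S,\ker\phi)=\ker\bigl(E^3\to E\bigr)=K,
\]
because a map $S\to S^3$ is a column over $E$, and it lands in $\ker\phi$ exactly when the column is killed by $(a,b,c)$. Combining this with $\ker\phi\cong S^2$, we obtain
\[
K=\Hom_R(S,\ker\phi)\cong\Hom_R(S,S^2)\cong E^2 ,
\]
which proves the lemma.

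I expect no serious obstacle. The only points to check are the following.
\begin{enumerate}[label=\textup{(\alph*)}]
\item The identification $\Hom_R(S,S^3)=E^3$ must be compatible with the right $E$-module structures, so that $K$ sits inside it as a right submodule. This is routine.
\item The uniform-dimension count must be correct. The exact sequence $0\to\ker\phi\to S^3\to\phi(S^3)\to0$ with $\phi(S^3)$ uniform gives $\udim\ker\phi\geq2$. Conversely, at least one coordinate axis meets $\ker\phi$ trivially, which gives $\udim\ker\phi\leq2$. Alternatively, Proposition~\ref{prop:criterion} gives $\ker\phi\cong S^r$, and $r\leq2$ can be argued via ranks.
\end{enumerate}
Once the kernel is known to be isomorphic to $S^2$, the $\Hom$ computation finishes the argument directly. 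Note that neither the tensor description $K\otimes_ES$ nor projectivity of $K$ is actually needed for this route.
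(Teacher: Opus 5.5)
Your proof is correct and follows the paper's route: you show that $\ker\phi\leq S^3$ is isomorphic to $S^2$ using the square hypothesis and Proposition~\ref{prop:criterion}, and then apply $\Hom_R(S,-)$ to identify it with the row kernel over $E$. The only difference is that you get $\udim\ker\phi\geq 2$ from subadditivity of uniform dimension along $0\to\ker\phi\to S^3\to\phi(S^3)\to 0$, whereas the paper exhibits two independent copies of $S$ in the kernel via the right Ore relations $au=bv$, $au'=cv'$; as you note, the flatness step is unnecessary.
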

	\begin{proof}
		Permute coordinates so that \(a\neq0\), and let
		\[
		K=\ker\bigl((a,b,c):S^3\longrightarrow S\bigr).
		\]
		Projection onto the last two coordinates is injective on \(K\),
		because \(a\) is injective. Right Ore divisibility gives
		\[
		au=bv,\qquad au'=cv',\qquad v,v'\neq0.
		\]
		If \(b=0\), take \(u=0,v=1\), and use the analogous convention
		if \(c=0\). The maps
		\[
		s\longmapsto(-u(s),v(s),0),\qquad
		t\longmapsto(-u'(t),0,v'(t))
		\]
		embed two independent copies of \(S\) into \(K\). Its image in
		\(S^2\) therefore has uniform dimension two. By
		Proposition~\ref{prop:criterion}, \(K\cong S^2\).
		Applying the left exact functor \(\Hom_R(S,-)\) identifies the row
		kernel over \(E\) with \(\Hom_R(S,K)\cong E^2\).
	\end{proof}
	
	\begin{proposition}\label{prop:commregular}
		Suppose that \(E\) is commutative and Noetherian. Then \(E\) is a
		regular UFD of Krull dimension at most two, and
		\(\gl E\leq2\).
	\end{proposition}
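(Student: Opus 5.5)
We are in the setting of Proposition~\ref{prop:commregular}: \(E\) is commutative Noetherian, and the standing hypothesis of this section is that \(S^2\) is isoartinian. The plan is to show three things in turn: every localization \(E_{\mathfrak m}\) at a maximal ideal is regular of dimension at most two; \(\gl E\leq2\); and \(E\) is a UFD.

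\textbf{UFD.} This is immediate from Corollary~\ref{cor:UFD}, since \(E\) is commutative.

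\textbf{Embedding dimension.} Fix a maximal ideal \(\mathfrak m\). I would show that \(\mathfrak m E_{\mathfrak m}\) is generated by at most two elements.

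Suppose \(\mu(\mathfrak m E_{\mathfrak m})\geq3\). By prime avoidance, pick \(a,b,c\in\mathfrak m\) whose images in \(\mathfrak m/\mathfrak m^2\) are linearly independent over \(E/\mathfrak m\). Lemma~\ref{lem:three-row} gives \(K=\ker(a,b,c)\cong E^2\). Localizing preserves exactness, so \(K_{\mathfrak m}\) is free of rank two.

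On the other hand, \(K\) contains the three Koszul relations
\[
(b,-a,0),\qquad (c,0,-a),\qquad (0,c,-b),
\]
all with entries in \(\mathfrak m\). Hence \(K_{\mathfrak m}\) would have to be generated by two vectors. I would reach a contradiction by passing to \(E_{\mathfrak m}/(a,b,c)\) or by counting minimal generators.

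A cleaner route is this. The ideal \(I=(a,b,c)\) sits in the exact sequence \(0\to K\to E^3\to I\to0\), so \(I\) has projective dimension at most one locally. The kernel \(K_{\mathfrak m}\) lies inside \(\mathfrak m E_{\mathfrak m}^3\): a relation with a unit coefficient would make \(a,b,c\) dependent modulo \(\mathfrak m^2\). Thus \(0\to E_{\mathfrak m}^2\to E_{\mathfrak m}^3\to I_{\mathfrak m}\to0\) is a minimal free resolution.

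Comparing Euler characteristics, \(I_{\mathfrak m}\) has rank \(3-2=1\). Now take the determinantal (Hilbert--Burch) description of \(I\): it equals \(g\) times the ideal of \(2\times2\) minors of the \(3\times2\) relation matrix. Since the relation matrix has entries in \(\mathfrak m\), those minors lie in \(\mathfrak m^2\). If \(g\) is a unit, this contradicts \(a\notin\mathfrak m^2\). If \(g\) is a nonunit, then \(I\subseteq g\mathfrak m^2\subseteq\mathfrak m^2\), again a contradiction. So \(\operatorname{edim}E_{\mathfrak m}\leq2\).

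\textbf{Regularity and dimension.} \(E_{\mathfrak m}\) is a Noetherian local domain, so its dimension is at most its embedding dimension, which is at most two. It remains to get equality of dimension and embedding dimension.

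If the embedding dimension is at most one, then \(\mathfrak m E_{\mathfrak m}\) is principal, so \(E_{\mathfrak m}\) is a field or a DVR, hence regular.

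If the embedding dimension is two, write \(\mathfrak m E_{\mathfrak m}=(a,b)\). Dimension one is excluded because \(E_{\mathfrak m}\) is a UFD: a one-dimensional Noetherian local UFD has height-one primes principal, so its maximal ideal would be principal. (Localization of a UFD is a UFD.) Hence the dimension is two and \(E_{\mathfrak m}\) is regular.

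This gives Krull dimension at most two, and Serre's theorem gives \(\gl E=\sup_{\mathfrak m}\gl E_{\mathfrak m}\leq2\).

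\textbf{Main obstacle.} The hard step is the Hilbert--Burch contradiction ruling out three minimal generators. If that argument turns out to be delicate, I would fall back on the rank-three analogue of Lemma~\ref{lem:three-row} applied to the first syzygy, or use \(\Tor^{E_{\mathfrak m}}_2(I,k)=0\) from the resolution against the Koszul lower bound \(\dim\Tor_2(E/I,k)\geq\binom32\).
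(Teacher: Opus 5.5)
Your proof is correct. Apart from one key step it follows the paper: the UFD property comes from Corollary~\ref{cor:UFD}, you reduce to \(\dim_k\mathfrak n/\mathfrak n^2\leq2\) at each \(A=E_{\mathfrak m}\), and then you do the local analysis and apply the global-dimension theorem.

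The step where you differ is ruling out three elements \(a,b,c\in\mathfrak n\) that are independent modulo \(\mathfrak n^2\). Both arguments use the same two inputs. First, the localized row kernel \(L\) is free of rank two by Lemma~\ref{lem:three-row}. Second, \(L\subseteq\mathfrak nA^3\), so \(0\to A^2\to A^3\to(a,b,c)\to0\) is a minimal resolution.

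The paper then finishes by hand. It shows the three Koszul relations have \(k\)-linearly independent images in \(L/\mathfrak nL\): the first coordinate modulo \(\mathfrak n^2\) forces \(\lambda_1,\lambda_2\in\mathfrak n\), and the second then forces \(\lambda_3\in\mathfrak n\). Hence \(L\) needs three generators. This is exactly the Koszul lower bound you list as a fallback.

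You instead quote Hilbert--Burch, which gives \((a,b,c)=g\,I_2(\varphi)\subseteq\mathfrak n^2\) because the relation matrix \(\varphi\) has entries in \(\mathfrak n\). This is shorter and valid, but it relies on a heavier structure theorem where the paper needs only a two-line linear-algebra check. Two small points in your version:
\begin{enumerate}[label=\textup{(\alph*)}]
\item Your split according to whether \(g\) is a unit is superfluous, since the inclusion in \(\mathfrak n^2\) holds in both cases.
\item No prime avoidance is needed to choose \(a,b,c\); you just lift independent vectors of \(\mathfrak m/\mathfrak m^2=\mathfrak n/\mathfrak n^2\).
\end{enumerate}

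For local dimension one, the paper uses normality and the normal-local criterion to get a DVR. You instead organize by embedding dimension and exclude dimension one (under embedding dimension two) because height-one primes of the UFD \(E_{\mathfrak m}\) are principal. The two are equivalent, and yours is slightly more elementary.
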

	\begin{proof}
		Corollary~\ref{cor:UFD} shows that \(E\) is a UFD, so it is normal.
		Fix a maximal ideal \(\mathfrak m\), and put
		\[
		A=E_{\mathfrak m},\qquad \mathfrak n=\mathfrak mA,
		\qquad k=A/\mathfrak n.
		\]
		We claim that
		\begin{equation}\label{eq:embedding-bound}
			\dim_k\mathfrak n/\mathfrak n^2\leq2.
		\end{equation}
		Suppose instead that \(a,b,c\in\mathfrak n\) have linearly
		independent images in \(\mathfrak n/\mathfrak n^2\). By multiplying
		all three by a common denominator outside \(\mathfrak m\), we may
		take them to come from elements of \(E\). Multiplication by this
		unit preserves the independence of their images. Localization of
		Lemma~\ref{lem:three-row} gives
		\[
		L=\ker\bigl((a,b,c):A^3\to A\bigr)\cong A^2.
		\]
		For every \((x,y,z)\in L\), reduction of
		\(ax+by+cz=0\) modulo \(\mathfrak n^2\) shows that
		\(x,y,z\in\mathfrak n\). Hence
		\begin{equation}\label{eq:relations-in-m}
			L\subseteq\mathfrak nA^3,
			\qquad \mathfrak nL\subseteq\mathfrak n^2A^3.
		\end{equation}
		
		The three Koszul relations
		\[
		u_1=(-b,a,0),\qquad u_2=(-c,0,a),\qquad u_3=(0,-c,b)
		\]
		belong to \(L\). Their images in \(L/\mathfrak nL\) are linearly
		independent. Indeed, if \(\sum_i\lambda_i u_i\in\mathfrak nL\),
		the first coordinate and \eqref{eq:relations-in-m} give
		\[
		-\overline{\lambda}_1\overline b
		-\overline{\lambda}_2\overline c=0
		\quad\text{in }\mathfrak n/\mathfrak n^2.
		\]
		Thus \(\lambda_1,\lambda_2\in\mathfrak n\). The second coordinate
		then gives \(\lambda_3\in\mathfrak n\), using the nonzero image of
		\(c\). It follows that \(\dim_kL/\mathfrak nL\geq3\), contrary to
		\(L\cong A^2\). This proves \eqref{eq:embedding-bound}.
		
		The Noetherian local domain \(A\) is normal and has dimension at most
		its embedding dimension, hence at most two. If its dimension is zero,
		it is a field. If its dimension is one, it is a DVR by the normal
		local criterion \cite[Tag 00PD]{Stacks}. If its dimension is two,
		\eqref{eq:embedding-bound} forces its embedding dimension to equal
		its dimension, so it is regular local. Thus every \(E_{\mathfrak m}\)
		is regular local of dimension at most two. The regular local global
		dimension theorem and its local--global form give
		\(\gl E\leq2\); see \cite[Tag 065U]{Stacks} or
		\cite{Matsumura}. The asserted dimension bound and regularity follow
		as well.
	\end{proof}
	
	\begin{proposition}\label{prop:commprojectives}
		If \(E\) is commutative and Noetherian, every finitely generated
		projective \(E\)-module is free.
	\end{proposition}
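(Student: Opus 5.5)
Throughout, the standing hypothesis of Section~\ref{sec:powers} is in force: \(S^2\) is isoartinian. The plan is to reduce to rank at most two and then use Lemma~\ref{lem:smallprojectives}. By Proposition~\ref{prop:commregular}, \(E\) is a Noetherian UFD of global dimension at most two, and each localization \(E_{\mathfrak m}\) is regular local of dimension at most two. Let \(P\) be a finitely generated projective \(E\)-module. Since \(E\) is a domain, \(P\) has constant rank \(r\). I will prove that \(P\cong E^{r-1}\oplus \det P\), and then show that \(\det P\) is free.

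\textbf{Step 1 (the determinant is trivial).} The rank one projective \(\det P=\wedge^rP\) is an invertible ideal class. Since \(E\) is a UFD, \(\operatorname{Pic}(E)=0\). Indeed, an invertible module embeds as a divisorial ideal \(I\le E\), and divisorial ideals of a UFD are principal: take the gcd \(g\) of a finite generating set, so \(I\subseteq gE\); the quotient \(g^{-1}I\) has height at least two, and reflexivity forces \(g^{-1}I=E\). Alternatively, \(\det P\) has rank one, so it is free directly by Lemma~\ref{lem:smallprojectives}. Either argument gives \(\det P\cong E\).

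\textbf{Step 2 (splitting off free summands).} I use Serre's splitting theorem: over a commutative Noetherian ring of Krull dimension \(d\), a finitely generated projective module of rank greater than \(d\) has a free direct summand of rank one. Here \(d\le2\). So whenever \(r\ge3\), I write \(P\cong P'\oplus E\) with \(\operatorname{rank}P'=r-1\). Iterating reduces to \(P\cong P_0\oplus E^{r-r_0}\), where \(r_0=\min(r,2)\).

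\textbf{Step 3 (the low-rank piece).} The module \(P_0\) has rank at most two, so \(P_0\) is free by Lemma~\ref{lem:smallprojectives}. This lemma is exactly where the square hypothesis enters. Hence \(P\) is free.

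The main obstacle is Step 2. One must be sure that Serre's theorem applies with the Krull dimension bound, and not with some finer invariant. The theorem requires \(E\) to be Noetherian of finite Krull dimension, which Proposition~\ref{prop:commregular} supplies. I would cite Serre's theorem in its standard form: for rank exceeding \(\dim E\), the module has a unimodular element. If one prefers to avoid Serre's theorem, a fallback is to work by induction on \(r\) using Lemma~\ref{lem:three-row}. That route shows that kernels of surjective rows \(E^3\to E\) are free, so unimodular rows of length three are completable. One would then try to extend this to longer rows via Bass stable range \(\le\dim E+1=3\). That extension is less direct, so Serre's theorem is the route I would take first.
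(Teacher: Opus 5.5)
Your Steps~2 and~3 are exactly the paper's proof. Proposition~\ref{prop:commregular} gives \(\dim E\le 2\), Serre's splitting theorem peels off free rank-one summands until the rank is at most two, and Lemma~\ref{lem:smallprojectives} (where the square hypothesis enters) handles the rest. Step~1 and the announced decomposition \(P\cong E^{r-1}\oplus\det P\) are unnecessary, and your steps do not actually prove that decomposition, since splitting with \(\dim E\le 2\) only reaches rank two; but the conclusion does not depend on them.
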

	\begin{proof}
		Since \(E\) is a domain, a finitely generated projective module has
		constant rank. Ranks zero, one and two are covered by
		Lemma~\ref{lem:smallprojectives}. If the rank is at least three,
		Proposition~\ref{prop:commregular} shows that it is greater than
		\(\dim E\). Serre's splitting theorem then gives a free direct
		summand of rank one; see \cite{Bass68} or
		\cite[Tag 0GV7, Theorem 15.129.5]{Stacks}.
		Continue splitting until the remaining projective module has rank
		at most two, and apply Lemma~\ref{lem:smallprojectives} again.
	\end{proof}
	
	\subsection{The commutative Noetherian case}
	
	It remains to pass from projective coefficient modules to arbitrary
	submodules of \(S^n\). The following elementary observation accounts
	for the part lost upon taking a double dual.
	
	\begin{lemma}\label{lem:codimension-two}
		Assume that \(E\) is commutative and Noetherian. If a finitely
		generated \(E\)-module \(T\) satisfies
		\[
		T_{\mathfrak p}=0\quad\text{for every prime }\mathfrak p
		\text{ of height at most one},
		\]
		then \(T\otimes_ES=0\).
	\end{lemma}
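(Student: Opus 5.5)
Since \(T\) is finitely generated and \(-\otimes_ES\) is right exact and commutes with finite direct sums, the first step is a reduction to cyclic modules. A prime filtration of \(T\) has subquotients \(E/\mathfrak q\) with \(\mathfrak q\in\operatorname{Supp}T\). Right exactness then shows that if each \(E/\mathfrak q\otimes_ES=S/\mathfrak qS\) vanishes, so does \(T\otimes_ES\). More directly, \(T\) is a quotient of \(E^m\), and it suffices to show \(E/I\otimes_E S=0\) for the annihilator \(I=\ann_E(T)\). Indeed, \(T\otimes_ES\) is a quotient of \((E/I)^m\otimes_ES=(S/IS)^m\). The hypothesis says that no prime of height at most one contains \(I\). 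In particular \(I\neq0\) and \(I\not\subseteq\mathfrak p\) for every height-one prime \(\mathfrak p\). So it suffices to prove \(IS=S\).

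Next I would produce finitely many elements \(a_1,\ldots,a_l\in I\) whose gcd is \(1\), in the spirit of the proof of Lemma~\ref{lem:reflexivetransfer}. By Corollary~\ref{cor:UFD}, \(E\) is a UFD, and its height-one primes are exactly the principal primes \(pE\) with \(p\) irreducible. Pick \(0\neq a_1\in I\). It has finitely many irreducible factors \(p_1,\ldots,p_j\) up to associates. For each \(p_i\), since \(I\not\subseteq p_iE\), choose \(a_{i+1}\in I\) not divisible by \(p_i\). Any common divisor of the resulting finite family must divide \(a_1\), yet no \(p_i\) divides all members, so the gcd is a unit.

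Finally I would show \(\sum_i a_iS=S\). The submodule \(\sum_i a_iS\) is nonzero, so by isosimplicity it equals \(fS\) for some nonzero \(f\in E\); here \(f\) is the composite of an isomorphism \(S\to\sum a_iS\) with the inclusion. Each \(a_iS\subseteq fS\), so the correspondence \(fE\supseteq a_iE\Leftrightarrow fS\supseteq a_iS\) of Lemma~\ref{lem:basic} gives \(f\mid a_i\) for all \(i\). Hence \(f\) is a unit and \(\sum a_iS=S\). Then \(IS\supseteq\sum a_iS=S\), so \(S/IS=0\) and \(T\otimes_ES=0\).

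The only delicate point is the identification of height-one primes of a Noetherian UFD with principal primes, which guarantees that the divisors \(p_i\) of \(a_1\) generate primes covered by the hypothesis. This is standard: every height-one prime contains an irreducible element \(p\), and \(pE\) is a nonzero prime, so it equals that prime by the height condition. Commutativity is used implicitly throughout, since it makes \(IS\) a submodule of \(S\) and makes \(T\) a module on the correct side. Everything else is elementary.
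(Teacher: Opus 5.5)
Your proof is correct and follows the paper's route: reduce to \(IS=S\) for \(I=\ann_E(T)\), find finitely many elements of \(I\) with unit gcd, write \(\sum a_iS=fS\) by isosimplicity and deduce from Lemma~\ref{lem:basic} that \(f\) is a unit. The only differences are that you build the gcd-one family from the prime factors of one element, as in Lemma~\ref{lem:reflexivetransfer}, where the paper takes a generating set of \(I\), and a small slip in your last paragraph: you need \(p_iE\) to have height at most one, which follows from Krull's principal ideal theorem or from the fact that any nonzero prime inside \(p_iE\) contains an associate of \(p_i\), whereas you verified the converse direction (height-one primes are principal).
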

	\begin{proof}
		There is nothing to prove for \(T=0\). Otherwise let
		\(I=\ann_E(T)\). Vanishing at the zero prime implies that \(T\)
		is torsion. Its finite generation then gives \(I\neq0\).
		For every height-one prime \(\mathfrak p\), finite generation and
		\(T_{\mathfrak p}=0\) give an element of \(I\) outside
		\(\mathfrak p\). Thus \(I\) is contained in no height-one prime.
		
		Choose generators \(a_1,\ldots,a_l\) of \(I\). Their gcd in the
		UFD \(E\) is a unit. Otherwise an irreducible common factor \(p\)
		would put \(I\) in the height-one prime \(pE\). Since every nonzero
		element of \(E\) acts injectively on \(S\), the submodule \(IS\)
		is nonzero. Write \(IS=fS\), where \(0\neq f\in E\), using
		isosimplicity. Each containment \(a_iS\subseteq fS\) gives
		\(a_i=fb_i\) with \(b_i\in E\). Hence \(f\) divides all the
		\(a_i\), and is a unit. Consequently \(IS=S\).
		
		For \(t\in T\) and \(s\in S\), write
		\(s=\sum_i a_is_i\). Since \(IT=0\) and \(E\) is commutative,
		\[
		t\otimes s=\sum_i ta_i\otimes s_i=0.
		\]
		This proves the assertion.
	\end{proof}
	
	\begin{theorem}\label{thm:comm-noetherian-powers}
		If \(E\) is commutative and Noetherian, every submodule of \(S^n\)
		is isomorphic to \(S^r\) for some \(0\leq r\leq n\). Hence every
		finite power of \(S\) is isoartinian.
	\end{theorem}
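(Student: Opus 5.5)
Note first that the hypothesis of this section is in force: \(S^2\) is isoartinian. That is what makes Proposition~\ref{prop:flat}, Lemma~\ref{lem:smallprojectives}, Proposition~\ref{prop:commregular} and Proposition~\ref{prop:commprojectives} available. The plan is to realize each \(N\leq S^n\) as \(M\otimes_ES\), replace \(M\) by its double dual \(P=M^{**}\) without changing \(N\), and then show that \(P\) is free.

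Fix \(N\leq S^n\). By Proposition~\ref{prop:flat} there is a finitely generated \(M\leq E^n\) with \(N=M\otimes_ES\) inside \(S^n\), and \({}_ES\) is flat. Put \(P=M^{**}\). Since \(E\) is a Noetherian domain and \(M\) is torsion-free, the natural map gives \(M\subseteq P\subseteq E^n\); the inclusion in \(E^n\) comes from the coordinate functionals, as in the proof of Lemma~\ref{lem:reflexivetransfer}. Now let \(T=P/M\). For a prime \(\mathfrak p\) of height at most one, \(E_{\mathfrak p}\) is a field or a DVR, because \(E\) is normal (a UFD). Over such a ring, finitely generated torsion-free modules are free and hence reflexive, so \(M_{\mathfrak p}=P_{\mathfrak p}\). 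Thus \(T_{\mathfrak p}=0\), and Lemma~\ref{lem:codimension-two} gives \(T\otimes_ES=0\). Flatness gives an exact sequence \(0\to M\otimes S\to P\otimes S\to T\otimes S=0\), and both tensor products embed in \(E^n\otimes_ES=S^n\). Hence \(N=P\otimes_ES\).

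Next we show \(P\) is projective. By Proposition~\ref{prop:commregular}, every localization \(E_{\mathfrak m}\) is regular local of dimension at most two. A finitely generated reflexive module over a Noetherian normal domain satisfies Serre's condition \((S_2)\), so \(P_{\mathfrak m}\) has depth \(\min(2,\dim E_{\mathfrak m})\), which equals \(\dim E_{\mathfrak m}\). Since \(\gl E_{\mathfrak m}\leq2\) is finite, the Auslander--Buchsbaum formula gives \(\operatorname{pd}P_{\mathfrak m}=0\). So \(P\) is locally free, hence projective, of rank \(r\leq n\). Proposition~\ref{prop:commprojectives} then makes \(P\) free, \(P\cong E^r\), and therefore
\[
N=P\otimes_ES\cong E^r\otimes_ES\cong S^r .
\]
This covers every \(N\), including \(N=0\) with \(r=0\). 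Proposition~\ref{prop:criterion} now yields isoartinianity of every \(S^n\).

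I expect the main obstacle to be the reflexive-implies-projective step, specifically the depth bookkeeping in dimension two. In dimensions zero and one the claim is immediate from the field and DVR cases. In dimension two I would first try the standard fact that the dual of a finitely generated module over a regular local ring of dimension two is free: present \(M_{\mathfrak m}^{*}\) as the kernel of a map between free modules, so it is a second syzygy, and a second syzygy over a ring of global dimension two is free. This gives \(P_{\mathfrak m}=(M_{\mathfrak m}^{*})^{*}\) free directly and avoids the \((S_2)\) argument. Dualizing commutes with localization because \(M\) is finitely presented. A smaller point to check carefully is that \(M\) and \(P\) really have the same image in \(S^n\), which is exactly why the flatness of \({}_ES\) is used in the exact sequence above.
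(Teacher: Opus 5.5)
Your proposal is correct and follows the paper's proof essentially step for step: write \(N=M\otimes_ES\), pass to \(P=M^{**}\), kill \(T=P/M\) with Lemma~\ref{lem:codimension-two} and flatness, and make \(P\) free via \(\gl E\leq 2\) and Proposition~\ref{prop:commprojectives}. The only variation is your primary argument for the projectivity of \(P\): you use reflexive \(\Rightarrow\) \((S_2)\) and then Auslander--Buchsbaum locally. The paper instead uses exactly your fallback, globally: \(M^*\) is a second syzygy, so \(\gl E\leq2\) makes it projective, and hence \(M^{**}\) is projective.
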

	\begin{proof}
		By Proposition~\ref{prop:flat}, write \(N\leq S^n\) as
		\(N=M\otimes_ES\), with \(M\leq E^n\) finitely generated. Put
		\(M^*=\Hom_E(M,E)\). The evaluation map \(M\to M^{**}\) is
		injective: the restrictions of the coordinate functionals of \(E^n\)
		already separate the elements of \(M\).
		
		We check projectivity of \(M^{**}\) explicitly. Choose a finite
		presentation \(E^a\to E^b\to M\to0\). Dualization gives
		\[
		0\longrightarrow M^*\longrightarrow E^b\longrightarrow E^a.
		\]
		If \(C\) is the cokernel of the last map, \(M^*\) is a second
		syzygy of \(C\). Since \(\gl E\leq2\), dimension shifting gives
		\(\Ext_E^1(M^*,X)=\Ext_E^3(C,X)=0\) for every \(X\).
		Thus \(M^*\) is projective. It is finitely generated by
		Noetherianity, so its dual \(M^{**}\) is finitely generated projective.
		By Proposition~\ref{prop:commprojectives},
		\[
		M^{**}\cong E^r.
		\]
		These dimension-shifting and duality facts are also treated in
		\cite{Rotman09,Stacks}.
		
		Let \(T=M^{**}/M\). At the zero prime the evaluation map is an
		isomorphism of finite-dimensional vector spaces. At a height-one
		prime \(\mathfrak p\), the ring \(E_{\mathfrak p}\) is a DVR,
		and the finitely generated torsion-free module \(M_{\mathfrak p}\)
		is free. Duals commute with localization, since the modules concerned
		are finitely presented. Hence the localized evaluation map is an
		isomorphism there too. Thus \(T\) satisfies the hypothesis of
		Lemma~\ref{lem:codimension-two}, and \(T\otimes_ES=0\).
		
		Tensoring
		\[
		0\longrightarrow M\longrightarrow M^{**}\longrightarrow T
		\longrightarrow0
		\]
		and using flatness yields
		\[
		N\cong M\otimes_ES\cong M^{**}\otimes_ES\cong S^r.
		\]
		Since \(N\leq S^n\), uniform dimensions give \(r\leq n\).
		Proposition~\ref{prop:criterion} proves isoartinianity.
	\end{proof}
	
	\begin{corollary}\label{cor:comm-obstructions}
		Let \(S\) be any isosimple module whose endomorphism ring \(E\)
		is commutative and Noetherian. If \(E\) is not regular, has Krull
		dimension at least three, or has a nonfree finitely generated projective
		module, then \(S^2\) is not isoartinian.
	\end{corollary}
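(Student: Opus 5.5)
This corollary is the contrapositive of the results just proved, so the plan is to assume \(S^2\) is isoartinian and derive regularity, the dimension bound and freeness of projectives. The key point is that every ingredient in Propositions~\ref{prop:flat}, \ref{prop:commregular} and \ref{prop:commprojectives} was derived only from the isoartinianity of \(S^2\) (via Proposition~\ref{prop:criterion} applied to submodules of \(S^2\)). The standing hypothesis of Section~\ref{sec:powers} is exactly that \(S^2\) is isoartinian. So I will assume \(S^2\) is isoartinian and \(E\) is commutative Noetherian, and then check that each conclusion follows.

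First, Proposition~\ref{prop:commregular} gives that \(E\) is a regular UFD with \(\gl E\leq 2\). In particular, every localization \(E_{\mathfrak m}\) is regular of dimension at most two, so the Krull dimension of \(E\) is at most two. This excludes the first two alternatives: \(E\) nonregular, or \(\dim E\geq3\).

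Second, Proposition~\ref{prop:commprojectives} gives that every finitely generated projective \(E\)-module is free. This excludes the third alternative. Taking the contrapositive of the three conclusions yields the corollary.

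The only step needing care is verifying that the cited results use only the square hypothesis, not isoartinianity of higher powers. I will trace the dependencies explicitly:
\begin{itemize}
\item Proposition~\ref{prop:flat} applies Proposition~\ref{prop:criterion} only to the modules \(P_j\leq S^2\).
\item Lemma~\ref{lem:three-row} applies it to \(K\), which embeds into \(S^2\) by projection.
\item Lemma~\ref{lem:smallprojectives} applies it to \(P\otimes_ES\hookrightarrow S^r\) with \(r\leq2\).
\end{itemize}
With this check, the corollary follows immediately.
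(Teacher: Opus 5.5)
Your proposal is correct and matches the paper's proof: the corollary is the contrapositive of Propositions~\ref{prop:commregular} and~\ref{prop:commprojectives} under the standing hypothesis that \(S^2\) is isoartinian. Your dependency check is accurate and makes explicit what the paper leaves implicit: Proposition~\ref{prop:criterion} is only ever applied to submodules of \(S^2\), namely in Proposition~\ref{prop:flat}, Lemma~\ref{lem:three-row} and Lemma~\ref{lem:smallprojectives}.
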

	\begin{proof}
		Each alternative contradicts Proposition~\ref{prop:commregular} or
		Proposition~\ref{prop:commprojectives} under the square hypothesis.
	\end{proof}
	
	Theorem~\ref{thm:comm-noetherian-powers} does not require \(R\) to be
	commutative, nor does it require \({}_ES\) to be finitely generated
	or to have a nonzero dual. This distinction matters in
	Theorem~\ref{thm:nagata}. That example also shows that one cannot add
	quasi-projectivity to the conclusion in the commutative Noetherian case.
	
	\subsection{The right semihereditary case}
	
	Recall that \(E\) is right semihereditary if every finitely generated
	right ideal is projective. The equivalent finite-submodule formulation
	is discussed in \cite{Lam99,Wis91}. We retain the square hypothesis
	throughout.
	
	\begin{theorem}\label{thm:semihereditary-powers}
		If \(E\) is right semihereditary, it is a principal right ideal domain.
		Every submodule of \(S^n\) is isomorphic to \(S^r\), \(0\leq r\leq n\).
		Moreover \(S\) is quasi-projective.
	\end{theorem}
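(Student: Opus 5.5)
We work throughout under the standing hypothesis that \(S^2\) is isoartinian, and proceed in three steps: \(E\) is a principal right ideal domain, then the submodule statement, then quasi-projectivity.

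\emph{Step 1: \(E\) is a principal right ideal domain.} Let \(I=a_1E+\cdots+a_mE\) be a finitely generated nonzero right ideal. Since \(E\) is right semihereditary, \(I\) is finitely generated projective. Because \(E\) is a right Ore domain (Lemma~\ref{lem:basic}), \(I\otimes_EQ\cong Q\), so \(I\) has rank one. Lemma~\ref{lem:smallprojectives} then gives \(I\cong E\), hence \(I=gE\) is principal. So \(E\) is a right Bezout domain.

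It remains to show that every right ideal is finitely generated. Take an ascending chain \(g_1E\subseteq g_2E\subseteq\cdots\) of principal right ideals. By Lemma~\ref{lem:basic} it corresponds to the chain \(g_1S\subseteq g_2S\subseteq\cdots\), which stabilizes because \(S\) is Noetherian. Hence \(E\) satisfies ACC on principal right ideals. A right Bezout domain with this property is right Noetherian: an arbitrary right ideal is the union of its finitely generated, hence principal, subideals, and ACC forces this union to be attained. Thus \(E\) is a principal right ideal domain.

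\emph{Step 2: every submodule of \(S^n\) is isomorphic to some \(S^r\).} Let \(N\leq S^n\). By Proposition~\ref{prop:flat}, \(N=M\otimes_ES\) with \(M\leq E^n\) finitely generated. Over a principal right ideal domain, submodules of \(E^n\) are free of rank at most \(n\) (the standard theorem already invoked in Proposition~\ref{prop:morita}). Hence \(M\cong E^r\) and \(N\cong S^r\). Uniform dimension gives \(r\leq n\), and Proposition~\ref{prop:criterion} yields isoartinianity.

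\emph{Step 3: \(S\) is quasi-projective.} This is the step I expect to be the main obstacle. Let \(0\neq U=uS\leq S\) and \(\phi:S\to S/U\). The plan is to encode \(\phi\) as a submodule of \(S^2\) and use Step 2 together with the structure of \(E\). Set
\[
P=\{(x,t)\in S^2:\ \phi(t)=x+U\}.
\]
It contains \(U\oplus 0\) and projects onto the second coordinate. By Step 2, \(P\cong S^2\), and more precisely \(P=M\otimes_ES\) with \(M\leq E^2\) free of rank two. Under \eqref{eq:projective-recovery}, \(\Hom_R(S,P)\cong M\) is therefore free of rank two, and \(P\) is generated by images of maps \(S\to P\) whose columns lie in \(M\).

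The projection \(M\to E\) onto the second coordinate has image a right ideal \(J=jE\), and the map \(M\otimes_ES\to jS\) is onto. Since \(P\) projects onto all of \(S\), we get \(jS=S\), so \(j\) is a unit. Hence some \((h,1)\in M\), that is, some endomorphism \(h\) with \((h(t),t)\in P\) for all \(t\). This \(h\) lifts \(\phi\).

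The delicate point is justifying that the second-coordinate image of \(M\otimes_ES\) equals \(JS\), so that surjectivity forces \(J=E\). This follows from right exactness of the tensor product, since \(J\otimes_ES=JS\) by flatness. I would check that step carefully.
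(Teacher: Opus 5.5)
Your proof is correct and is essentially the paper's: Steps 1 and 2 coincide with it. Step 3 runs on the same mechanism, namely Bezout together with the invertibility of surjective endomorphisms, applied to the coordinates of maps generating the pullback submodule of \(S^2\).

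The only difference is how you get those coordinates. You read them off generators \(m_i=(a_i,b_i)^{T}\) of \(P\), whereas the paper uses an isomorphism \(S^2\cong P\) followed by the splitting of an epimorphism \(S^2\to S\). So you never actually need \(P\cong S^2\), which, as you state it "by Step 2", would require the paper's Hopf argument to exclude \(P\cong S\). The point you flag as delicate is immediate: the second projection of \(P=\sum_i m_iS\) is \(\sum_i b_iS=JS\), with no flatness or tensor argument required.
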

	\begin{proof}
		Let \(I\) be a nonzero finitely generated right ideal. It is
		projective and has rank one over the right quotient division ring.
		Lemma~\ref{lem:smallprojectives} gives \(I\cong E\), so \(I\) is
		principal. Hence \(E\) is right Bezout.
		
		Lemma~\ref{lem:basic} and the Noetherian property of \(S\) show that
		\(E\) satisfies the ascending chain condition on principal right
		ideals. A right Bezout ring with this condition is right Noetherian:
		if a right ideal \(J\) were not finitely generated, choose successively
		\(a_{j+1}\in J\setminus\sum_{i\leq j}a_iE\). The finitely
		generated right ideals \(\sum_{i\leq j}a_iE\) are principal and
		form a strictly ascending chain. Thus every right ideal is finitely
		generated, and therefore principal. This proves that \(E\) is a
		principal right ideal domain. The same Bezout-to-principal argument
		for endomorphism rings of isosimple modules appears in
		\cite[Corollary 2.5]{FN19}.
		
		Every finitely generated submodule \(M\leq E^n\) is free of rank
		at most \(n\). For clarity, project \(M\) onto the last coordinate,
		with image \(I\). If \(I=0\), use induction in \(E^{n-1}\).
		Otherwise \(I\cong E\), so the sequence
		\[
		0\longrightarrow M\cap E^{n-1}\longrightarrow M
		\longrightarrow I\longrightarrow0
		\]
		splits. The kernel is finitely generated, as a direct summand of the
		finitely generated module \(M\). Induction now proves the assertion.
		Proposition~\ref{prop:flat} consequently gives
		\(N\cong M\otimes_ES\cong S^r\) for each \(N\leq S^n\).
		
		We finish by proving quasi-projectivity without identifying it with
		the power conclusion. Every epimorphism
		\(q=(f,g):S^2\to S\) splits. Indeed, write
		\(fE+gE=hE\). Then
		\[
		S=fS+gS=hS,
		\]
		so \(h\) is an invertible endomorphism, and \(fE+gE=E\).
		Choose \(a,b\in E\) with \(fa+gb=1\); the column \((a,b)\)
		is a section of \(q\).
		
		Now let \(U\leq S\) and let \(v:S\to S/U\) be any homomorphism.
		The cases \(U=0\) and \(U=S\) are immediate. Otherwise form
		\[
		P_v=\{(x,y)\in S^2:v(x)=y+U\}.
		\]
		Projection onto the first coordinate gives an epimorphism
		\(P_v\to S\) with nonzero kernel \(0\oplus U\).
		By Proposition~\ref{prop:criterion}, \(P_v\) is isomorphic to
		\(S\) or \(S^2\). The first possibility contradicts the Hopf
		property of \(S\), since the projection is surjective with nonzero
		kernel. Thus \(P_v\cong S^2\), and its projection splits by the
		preceding paragraph. The second coordinate of a section gives a lift
		\(S\to S\) of \(v\). Hence \(S\) is quasi-projective.
	\end{proof}
	
	\begin{proof}[Proof of Theorem~\ref{thm:power-main}]
		Case \textup{(i)} is Theorem~\ref{thm:comm-noetherian-powers}, together
		with Propositions~\ref{prop:commregular} and
		\ref{prop:commprojectives}. Case \textup{(ii)} is
		Theorem~\ref{thm:semihereditary-powers}. In either case, a descending
		chain in \(S^n\) has terms of the form \(S^{r_i}\), with
		\(n\geq r_i\geq r_{i+1}\geq0\). These integers eventually
		stabilize, so the isomorphism types do as well.
	\end{proof}
	
	Finally, we leave the following question.
	\begin{question}\label{qu:final}
		For an arbitrary isosimple module \(S\), does the isoartinianity of
		\(S^2\) imply the isoartinianity of every finite power of \(S\)?
	\end{question}
	
	Theorem~\ref{thm:power-main} answers this question when the
	endomorphism ring is commutative Noetherian or right semihereditary.

\end{document}